\newtheorem{theorem}{Theorem}
\newtheorem{lemma}{Lemma}
\newtheorem{proposition}{Proposition}
\newtheorem{corollary}{Corollary}
\newtheorem{remark}{Remark}
\title{Effective equidistribution of rational points on  horocycle sections in ASL$(2,\mathbb{Z})\backslash \text{ASL}(2,\mathbb{R})$ } 
\author{Sam Pattison}
\date{}
\begin{document}

\maketitle

\begin{abstract}
    In this paper we prove an effective equidistribution result for both primitive and non-primitive points on certain expanding horocycle sections in $ \text{ASL}(2,\mathbb{Z}) \backslash \text{ASL}(2,\mathbb{R})$. This provides an effective version of a result recently proven by the author and generalises recent work by Einsiedler, Luethi and Shah who prove similar results in the context of the unit tangent bundle of the modular surface. 
\end{abstract}

\section{Introduction}

Let $G := $ASL$(2,\mathbb{R})$ be the semi-direct product SL$(2,\mathbb{R}) \ltimes \mathbb{R}^2$
with multiplication rule 
$$
(M,x)\cdot (M',x') = (MM',xM' + x).
$$
Let $\Gamma =  $ SL$(2,\mathbb{Z}) \ltimes \mathbb{Z}^2$ be the subgroup of $G$ consisting of elements with integer entries. With respect to the Haar measure $m_G$ on $G$, $\Gamma$ is lattice in $G$. This means the quotient space $X := \Gamma \backslash G$ can be given a Haar measure $\nu$ by restricting the Haar measure on $G$ to a fundamental domain for the left-action of $\Gamma$ on $G$. This measure $\nu$ is invariant under the action of $G$ on $X$ given by $g \cdot x = xg^{-1}$. As a result, the homogeneous space $X$ is a measure preserving dynamical system $(X,\mu,G)$. We fix a left-invariant Riemannian metric $\rho$ on $G$ 

On the space $G$, we consider the following one parameter subgroups
$$
u(t) := \bigg(\begin{pmatrix}
1 & t \\
0 & 1
\end{pmatrix}, (0,0) \bigg),
$$

$$
\Phi(t) := 
\bigg(\begin{pmatrix}
e^{t/2} & 0 \\
0 & e^{-t/2}
\end{pmatrix}, (0,0) \bigg),
$$
and set
$$
a(y) := \Phi(\log(y)) = \bigg(\begin{pmatrix}
y^{1/2} & 0 \\
0 & y^{-1/2}
\end{pmatrix}, (0,0) \bigg).
$$
Orbits of $x \in X$ under the right action of the subgroup $u(t)$ correspond, via projection to the first coordinate, to horocycles in the unit tangent bundle of the modular surface  SL$(2,\mathbb{Z}) \backslash $ SL$(2,\mathbb{R})$. Hence, $u(t)$ is an example of a \textit{horocycle section}. In general, following \cite{elkies2004gaps}, we call a horocycle section a map  $n:\mathbb{R} \to G$ which takes the form
$$
n(t) := (I,\xi(t))u(t)
$$
for a continuous function $\xi:\mathbb{R} \to \mathbb{R}^2$.

For such a horocycle section, we define $\Lambda(t) = \Lambda_{\xi}(t) := t \xi_1(t) + \xi_2(t)$. Such a horocycle section is called \textit{periodic}, with period $P \in \mathbb{N}$, if there exists $\gamma \in \Gamma$ such that $n(t+P) = \gamma n(t)$ for all $t \in \mathbb{R}$. Moreover, we call such a horocycle section \textit{rationally linear} if there are rationals $\alpha, \beta \in \mathbb{Q}$ such that the Lebesgue measure of the set $\{t \in \mathbb{R}: \Lambda(t) = \alpha t + \beta \}$ is non-zero.

In \cite{elkies2004gaps}, Elkies and McMullen prove the following equidistribution result, which generalises the equidistribution of long closed horocycles in the modular surface (\cite{sarnak1981asymptotic}).

\begin{theorem}[\cite{elkies2004gaps},Theorem 2.2]\label{non-effective}
Let $n(t)$ be a smooth horocycle section of period $P$ which is not rationally linear. Then, the loops $\{n(t)a(y) \}_{t \in [0,P]}$  equidistribute in the space $X$ as $y \to 0^{+}$. Specifically, for any continuous bounded function $f:X \to \mathbb{R}$ we have that
$$
\frac{1}{P} \int_0^P f(n(t)a(y)) \; dt \to \int_X f \; d \nu.
$$
as $y \to 0^{+}$.
\end{theorem}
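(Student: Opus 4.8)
The plan is to establish weak-$*$ convergence of the empirical measures $\nu_y:=\frac1P\int_0^P\delta_{n(t)a(y)}\,dt$ to $\nu$, working throughout with the fibration $\pi\colon X\to Y:=\mathrm{SL}(2,\mathbb Z)\backslash\mathrm{SL}(2,\mathbb R)$ whose fibres are the flat tori $\mathbb R^2/\mathcal L$. Since $X$ is noncompact I would first rule out escape of mass: the $\mathrm{SL}(2,\mathbb R)$-component of $n(t)a(y)$ is $\left(\begin{smallmatrix}y^{1/2}&ty^{-1/2}\\0&y^{-1/2}\end{smallmatrix}\right)$, a piece of an expanding horocycle, and the set of $t\in[0,P]$ along which the underlying lattice has a nonzero vector of length $<\varepsilon$ is a union of intervals of total length $O(\varepsilon)$ clustered near rationals of small denominator (quantitative non-divergence for the horocycle flow). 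Hence $\nu_y$ of the $\varepsilon$-thin part of $X$ is $O(\varepsilon)$ uniformly in $y$, every weak-$*$ limit is a probability measure, and it suffices to check $\nu_y(f)\to\nu(f)$ for $f$ in a dense subclass of $C_c(X)$. I would use pullbacks $F\circ\pi$ with $F\in C_c(Y)$, together with Siegel transforms $\widehat\psi(\mathcal L):=\sum_{w\in\mathcal L}\psi(w)$, $\psi\in C_c^\infty(\mathbb R^2)$, and their $k$-point analogues; these generate a point-separating subalgebra. For a pullback $F\circ\pi$ one has $\nu_y(F\circ\pi)=\frac1P\int_0^P F\big(\pi(n(t)a(y))\big)\,dt$, and since $t\mapsto\pi(n(t)a(y))$ traverses the closed horocycle of length $1/y$ in $Y$ exactly $P$ times, this converges to $\int_Y F$ by Sarnak's equidistribution of long closed horocycles \cite{sarnak1981asymptotic}.

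The heart of the matter is the fibre direction, captured by $f=\widehat\psi$. A direct computation with the multiplication rule shows the affine lattice attached to $n(t)a(y)$ is $\mathbb Z^2\left(\begin{smallmatrix}y^{1/2}&ty^{-1/2}\\0&y^{-1/2}\end{smallmatrix}\right)$ translated by $\big(\xi_1(t)y^{1/2},\ y^{-1/2}\Lambda(t)\big)$, whence
\[
\widehat\psi\big(n(t)a(y)\big)=\sum_{(p,q)\in\mathbb Z^2}\psi\!\left((p+\xi_1(t))\,y^{1/2},\ \frac{pt+q+\Lambda(t)}{y^{1/2}}\right);
\]
here the first slot governs the underlying lattice and the second the position in the fibre, with $\Lambda=\Lambda_\xi$ sitting precisely in the fibre slot. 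Averaging over $t$ and applying Poisson summation in the fibre variable $q$, the zero-frequency term equals $y^{1/2}\sum_{p}\frac1P\int_0^P\!\int_{\mathbb R}\psi\big((p+\xi_1(t))y^{1/2},s\big)\,ds\,dt$, which is a Riemann sum of mesh $y^{1/2}\to0$ in the variable $p$ and converges to $\int_{\mathbb R^2}\psi$; by Siegel's mean value formula for affine lattices this is exactly $\nu(\widehat\psi)$. The nonzero-frequency terms are oscillatory integrals in $t$ with phase $2\pi m\big(pt+\Lambda(t)\big)$, $m\in\mathbb Z\setminus\{0\}$, $p\in\mathbb Z$, and these must be shown to be negligible in aggregate.

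This last point is where the non–rational-linearity hypothesis enters and is the main obstacle. For $|p|$ large compared with $\|\Lambda'\|_\infty$ the derivative $m(p+\Lambda'(t))$ never vanishes, so repeated integration by parts gives decay strong enough to absorb the $O(y^{-1/2})$ ranges of both $p$ and $m$. For the finitely many remaining $p$ the phase $t\mapsto pt+\Lambda(t)$ is still not rationally linear — the property is inherited from $\Lambda$ since subtracting $pt$ with $p\in\mathbb Z$ preserves it — and this is exactly what forces the corresponding exponential integrals $\int_0^P e^{2\pi i m(pt+\Lambda(t))}\,dt$ (with $\psi$-dependent smooth weights) to $0$. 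Conversely, this is the mechanism that fails in the rationally-linear case: for instance $\xi\equiv0$ gives $\Lambda\equiv0$, the orbit is confined to the $\nu$-null locus of affine lattices marked at the origin, the $p=0$ strand does not disperse in the fibre, and the limit acquires an extra atom $\psi(0)$ — consistent with Siegel's formula applied on $Y$ via Sarnak.

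The delicate step, and where I expect the real work to lie, is making the bound on the nonzero-frequency terms sufficiently uniform: these are sums of $\asymp y^{-1/2}$ oscillatory integrals, and one needs the decay of $\int_0^P e^{2\pi i m(pt+\Lambda(t))}\,dt$ to hold uniformly as $m\to\infty$ for the boundedly many resonant $p$, rather than merely the pointwise Riemann--Lebesgue statement. For the non-effective conclusion sought here this can be extracted from Weyl-type equidistribution for the smooth function $pt+\Lambda(t)$ together with dominated convergence over the Lebesgue-null set on which rational-linear behaviour could occur; the $k$-point Siegel transforms and the general $f\in C_c(X)$ are then handled by the same unfolding, now interacting with Sarnak's theorem in appropriate finite covers of $Y$.
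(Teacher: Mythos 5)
The paper does not contain its own proof of this statement: it is Theorem~2.2 of Elkies and McMullen, quoted here and used as a black box, so there is no internal argument to compare yours against. Elkies and McMullen's proof is ergodic-theoretic (Ratner's measure classification together with Shah-type equidistribution of translates of unipotent orbits): a weak-$*$ limit of the $\nu_y$ is shown to be a probability measure invariant under the full upper-triangular unipotent subgroup, measure rigidity reduces it to a combination of algebraic measures, and the non--rationally-linear hypothesis rules out mass on proper invariant subvarieties. Your proposal follows a genuinely different, Fourier-analytic route --- unfold the Siegel transform, Poisson-sum in the fibre, estimate the non-zero frequencies as oscillatory integrals --- which is the template of the \emph{effective} results (Str\"{o}mbergsson, Browning--Vinogradov, Vinogradov) that the present paper takes as its hypothesis (\ref{initialassumption}). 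That parallel makes your approach natural in context, but the two methods trade generality for rates: the ergodic proof needs no quantitative input and covers all non--rationally-linear sections, while the Fourier method fundamentally requires a decay rate.

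That trade-off is exactly where your sketch has a genuine gap, at the step you yourself flag as ``where the real work lies.'' You propose to control the resonant terms (bounded $p$, all $m\neq 0$) via decay of $\int_0^P e^{2\pi i m(pt+\Lambda(t))}w(t)\,dt$ extracted from ``Weyl-type equidistribution of $pt+\Lambda(t)$ plus dominated convergence.'' But non--rational-linearity is a measure-zero condition on $\Lambda$ and cannot by itself give any rate in $m$, and the term-by-term decay can fail outright. Take $\xi\equiv(0,\xi_2)$ constant with $\xi_2$ irrational, a legitimate period-$1$ section; then $\Lambda\equiv\xi_2$ is constant and not rationally linear, yet for $p=0$ the phase is $t$-independent, $\int_0^P e^{2\pi i m\Lambda}w(t)\,dt$ does not decay in $m$ at all, and summing the $\asymp y^{-1/2}$ weights of size $y^{1/2}$ gives $O(1)$ naively. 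The cancellation in this example is \emph{between} the values of $m$ (undo the Poisson step, or re-sum and use $\operatorname{dist}(\Lambda,\mathbb Z)>0$, or impose a Diophantine condition as Str\"{o}mbergsson does), not inside each oscillatory integral, so the Weyl/Riemann--Lebesgue mechanism you invoke is inert precisely in the degenerate configurations the hypothesis still permits. A secondary, more technical issue: Siegel transforms of $\psi\in C_c^\infty(\mathbb R^2)$ are unbounded near the cusp, so they and their $k$-point analogues do not form a point-separating subalgebra of $C_c(X)$; passing from them to general $f\in C_c(X)$ requires a truncation argument interlocked with the non-escape-of-mass bound, which your outline elides.
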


Subsequently, many authors have worked to prove effective versions of this theorem of the form
\begin{equation}\label{effectivehorocycle}
    \int_{\mathbb{R}} f(n(t)a(y)) \psi(t) \; dt = \int_X f \; d \nu \int_{\mathbb{R}} \psi(t) \; dt + O(y^{\delta})
\end{equation}
for different horocycle sections $n(t)$, where $f \in C_c^{\infty}(X)$ and $\psi \in C_c^{\infty}(\mathbb{R})$ are smooth and compactly supported. Here the implicit constant will depend upon Sobolev norms $||\cdot||_{C^d_b(X)}$ and $||\cdot ||_{W^{d,p}}$ of the functions $f$ and $\psi$ which are defined precisely in \S \ref{Sobolevnorms}. Indeed,
\begin{itemize}
    \item In \cite{strombergsson2015effective}, Str\"{o}mbergsson proves an effective version of this result when $\xi(t) = (\alpha, \beta)$ and $(\alpha,\beta)$ satisfy certain Diophantine conditions. 
    \item In \cite{browning2016effective}, Browning and Vinogradov prove an effective equidistribution result when $\xi(t) = (t/2,-t^2/4)$, the case which was found to be related to gap distribution of $\sqrt{n}$ by Elkies and McMullen in \cite{elkies2004gaps}. 
    \item Finally, in \cite{vinogradov2016effective}, Vinogradov extends this to horocycle sections for which $\Lambda(t)$ is twice continuous differentiable and sufficiently \textit{nice}. 
\end{itemize}

Now, in \cite{pattison2021rational}, the author extends (\ref{non-effective}) in a different direction by proving the equidistribution of rational points on horocycle sections.

\begin{theorem}\label{original}
Let $n(t)$ be a horocycle section of period $P$ which isn't rationally linear. Then, for any $f \in C_c^{\infty}(X)$ we have that
$$\frac{1}{PN}\sum_{k=0}^{PN-1} f(\Gamma n(k/N) a(y))  \to \int_X f \; d \nu$$
as $N \to \infty$ with $y \asymp \frac{1}{N}$.
\end{theorem}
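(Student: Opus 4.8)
The plan is to decompose the test function along the fibres of the torus bundle $\pi:X\to Y:=\mathrm{SL}(2,\mathbb{Z})\backslash\mathrm{SL}(2,\mathbb{R})$ induced by $(M,x)\mapsto M$, and to reduce the statement to two inputs: the equidistribution of rational points on the closed horocycle in the unit tangent bundle $Y$ of the modular surface (for the constant fibre mode), and cancellation in oscillatory sums governed by $\Lambda$ (for the remaining modes). I would first fix $f\in C_c^\infty(X)$ — this suffices after a routine non-escape-of-mass argument showing the averaging measures $\mu_N:=\tfrac1{PN}\sum_{k=0}^{PN-1}\delta_{\Gamma n(k/N)a(y(N))}$ are tight, so that every weak-$*$ limit is a probability measure. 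Expanding $f$ on the fibres of $\pi$ (each a copy of $\mathbb{R}^2/\mathbb{Z}^2$), with $e(s):=e^{2\pi i s}$,
$$
f\big(\Gamma(M,x)\big)=\sum_{\mathbf m\in\mathbb{Z}^2}c_{\mathbf m}(M)\,e\big(\langle \mathbf m, xM^{-1}\rangle\big),
$$
where $c_0=\pi_*f\in C_c^\infty(Y)$, and for $\mathbf m\ne 0$ the $c_{\mathbf m}$ are smooth, transform under $\mathrm{SL}(2,\mathbb{Z})$ by permuting the index $\mathbf m$, and (integrating by parts on the fibre) decay faster than any power of $|\mathbf m|$, uniformly in $M$. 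Since $n(t)a(y)=(M_t,\xi(t)M_t)$ with $M_t$ the matrix part of $u(t)a(y)$, the fibre coordinate at the sample points is $xM^{-1}=\xi(t)$, so
$$
\frac1{PN}\sum_{k=0}^{PN-1}f\big(\Gamma n(k/N)a(y)\big)=\sum_{\mathbf m\in\mathbb{Z}^2}S_{\mathbf m}(N),\qquad S_{\mathbf m}(N):=\frac1{PN}\sum_{k=0}^{PN-1}c_{\mathbf m}\big(u(k/N)a(y)\big)\,e\big(\langle\mathbf m,\xi(k/N)\rangle\big).
$$

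For $\mathbf m=0$, $S_0(N)$ is the average of the fixed function $g:=c_0\in C_c^\infty(Y)$ over the rational points $\Gamma_Y u(k/N)a(y)$ on the closed horocycle in $Y$. Stratifying by $d=\gcd(k,N)$, one has $u(k/N)=u(k'/N'')$ with $N''=N/d$, so $S_0(N)$ is a convex combination (weights $\varphi(N'')/N$) of averages of $g$ over \emph{primitive} denominator-$N''$ rational points; the scaling $y\asymp 1/N$ is exactly the one making consecutive samples differ by an $O(1)$ translate along the horocycle (since $a(y)^{-1}u(1/N)a(y)=u(1/(Ny))$), i.e.\ the critical scale at which such a discrete sample — $O(1)$-separated along a loop of length $\asymp PN$ — can equidistribute. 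Granting the equidistribution of primitive rational points on closed horocycles in $Y$, uniformly in the denominator and in the (possibly sub-natural) expansion scale $yN''$ — the unit-tangent-bundle statement of Einsiedler--Luethi--Shah — together with the fact that the strata with $d$ large carry vanishing total mass, one obtains $S_0(N)\to\int_Y g\,d\nu_Y=\int_X f\,d\nu$ (with $\nu_Y$ the $\mathrm{SL}(2,\mathbb{R})$-invariant probability measure on $Y$), the claimed main term.

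It remains to show $S_{\mathbf m}(N)\to 0$ for $\mathbf m\ne 0$, with a bound polynomial in $|\mathbf m|$ so that, by the decay of the $c_{\mathbf m}$, the sum of the errors over $\mathbf m$ also vanishes. Renormalising via $u(t)a(y)=a(y)u(t/y)$ and cutting $[0,P]$ into $\asymp 1/y\asymp N$ subintervals on which $\Gamma_Y u(\cdot)a(y)$ stays comparable to a fixed compact arc of horocycle, $S_{\mathbf m}(N)$ becomes a combination of short exponential sums in $k$ whose phase is built from $m_1\xi_1(k/N)+m_2\xi_2(k/N)$; after the change of variables this phase is controlled by $\Lambda(t)=t\xi_1(t)+\xi_2(t)$ and by $t$, and the hypothesis that $n$ is \emph{not rationally linear} is exactly what forbids $\Lambda$ from agreeing, on a set of positive measure, with an affine function of rational slope and intercept — the only configuration in which the phase could resonate with the progression $k/N$ and kill the cancellation. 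Extracting the cancellation, e.g.\ by van der Corput / Weyl differencing, or equivalently by testing the equidistribution of the horocycle in $Y$ against the additive twist $g\mapsto g\cdot e(m_1\xi_1+m_2\xi_2)$, gives $S_{\mathbf m}(N)=o_N(1)$ with the needed uniformity. (For the merely-continuous $\xi$ permitted in the statement, where these oscillatory-integral estimates are unavailable, the same scheme can instead be carried out via unipotent dynamics: realise the relevant averages inside a larger homogeneous space, apply Ratner's measure classification and non-divergence of unipotent orbits to any weak-$*$ limit, and use non-rational-linearity to exclude the intermediate invariant measures; this route needs no regularity but yields no rate.)

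The main obstacle is the family of non-zero Fourier modes. There the arithmetic of $k/N$ dictates how far up the cusp of $Y$ the sample point $\Gamma_Y u(k/N)a(y)$ travels — so $c_{\mathbf m}$, though compactly supported, is evaluated along a sample that is extremely non-uniform at finite $N$ — while the analytic phase $m_1\xi_1(k/N)+m_2\xi_2(k/N)$ must nonetheless produce cancellation; the crux is to convert the qualitative hypothesis ``not rationally linear'' into an honest decay estimate for $S_{\mathbf m}(N)$ that is simultaneously uniform in $\mathbf m$ and stable under those cusp excursions (controlled by moment / non-escape-of-mass estimates for pieces of horocycle). The remaining ingredients — the reductions, the $\gcd$-stratification, the sum over $\mathbf m$ — are bookkeeping built on standard horocycle equidistribution.
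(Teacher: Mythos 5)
Your decomposition along the fibres of $X\to Y:=\mathrm{SL}(2,\mathbb{Z})\backslash\mathrm{SL}(2,\mathbb{R})$ is a genuinely different route from the one the paper (via \cite{pattison2021rational}) actually follows. The paper's proof of Theorem~\ref{original} treats the Elkies--McMullen continuous equidistribution (Theorem~\ref{non-effective}) as a black box and passes from the continuous average $\frac1P\int_0^P f(n(t)a(y))\,dt$ to the discrete sum $\frac1{PN}\sum_k f(n(k/N)a(y))$ by exploiting mixing of the right $u(\cdot)$-flow on $X$; this is exactly the mechanism quantified here in Proposition~\ref{non-primitive interval}, where the gap between the two averages is measured by the discrepancy operator $D_Mf$ and controlled in $L^2$ via Cauchy--Schwarz and the mixing rate. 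That argument is oblivious to the fibre structure and treats all of $\xi$ at once.

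There is a concrete gap in your treatment of the non-zero Fourier modes. In your parametrization the fibre coordinate of $n(t)a(y)$ is $v=xM^{-1}=\xi(t)$, independent of $y$, so the phase $\langle\mathbf{m},\xi(k/N)\rangle$ appearing in $S_{\mathbf{m}}(N)$ is \emph{constant} in $k$ in Str\"ombergsson's case $\xi\equiv(\xi_1,\xi_2)$, and in general moves by only $O(|\mathbf{m}|/N)$ between consecutive samples; a van der Corput/Weyl differencing in $k$ over your $O(N)$ subintervals therefore extracts no cancellation from the phase. Nor does $\Lambda$ control this phase after renormalization: $\Lambda(t)=t\xi_1(t)+\xi_2(t)$ enters the problem because the translation coordinate of $n(t)a(y)$ is $(\xi_1(t)\sqrt{y},\,\Lambda(t)/\sqrt{y})$, and your normalization $v=xM^{-1}$ deliberately strips off this scaling, pushing the whole difficulty into the base weight $c_{\mathbf{m}}(u(k/N)a(y))$. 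In the constant-$\xi$ case, $S_{\mathbf{m}}(N)$ is, up to the fixed phase $e(\langle\mathbf{m},\xi\rangle)$, simply $\frac1{PN}\sum_k c_{\mathbf{m}}(u(k/N)a(y))$, where $c_{\mathbf{m}}$ for $\mathbf{m}\neq 0$ is a \emph{non}-$\mathrm{SL}(2,\mathbb{Z})$-invariant function on $\mathrm{SL}(2,\mathbb{R})$; it is precisely here that the Diophantine hypothesis on $\xi$ must act, yet your oscillatory-sum mechanism says nothing about it, and the Ratner-theory fallback you offer for merely continuous $\xi$ --- the regularity the theorem is actually stated for --- is an assertion rather than an argument. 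For the zero mode, note also that ``equidistribution of gcd-stratified rational points on closed horocycles in $Y$'' is not a softer input than the theorem: it is the $\xi\equiv 0$ sampling problem, and it is established (here and in \cite{einsiedler2021primitive}) by essentially the same mixing-plus-continuous-equidistribution route. Your scaffold is a plausible alternative --- it is closer in spirit to how the effective inputs \cite{strombergsson2015effective,browning2016effective} are themselves obtained --- but its central claim, the decay of $S_{\mathbf{m}}(N)$ for $\mathbf{m}\neq 0$, is neither established nor a consequence of phase oscillation as written.
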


A natural question to ask is whether, like Theorem \ref{non-effective}, it is possible to make this result effective. Here we answer this question in the affirmative in the case when an equidistribution statement equivalent to (\ref{effectivehorocycle}) holds for $n(t)$.

\begin{theorem}\label{main}
Let $n(t) = (I,\xi(t))u(t)$ be a horocycle section such where $\xi_1$ is continuous and $\Lambda_{\xi}$ is continuously differentiable. Suppose that there exists $\delta > 0$, $d > 0$ such that for all $ \alpha < \beta $ and all $f \in C_b^d(X)$ there exists a constant $C = C(\xi|_{[\alpha,\beta]})$ such that
\begin{equation}\label{initialassumption}
    \bigg|\frac{1}{\beta - \alpha}\int_{\alpha}^{\beta} f(n(t)a(y)) \; dt - \int_X f \; d \nu \bigg| \leq \frac{C L y^{\delta}||f||_{C^d_b(X)}}{\beta - \alpha}
\end{equation}
as $y \to 0^{+}$, where $L := \max(|\alpha|,|\beta|,1)$.
Then, there exists constants $\delta_1, \delta_2 > 0 $ and constants $C_1 = C_1(\xi,\psi)$, $C_2 = C_2(\xi,\psi)$ such that for any compactly supported $C^1$ function $\psi$ and any $f \in C_b^d(X)$, we have that 
\begin{equation}\label{non-primitive case}
\bigg| \frac{1}{N} \sum_{k \in \mathbb{Z}} f(n(k/N)a(1/N)) \psi(k/N) - \int_X f \; d \nu \int_{\mathbb{R}} \psi \; dt \bigg| \leq C_1 ||f||_{C^{d'}_b(X)} N^{-\delta_1}
\end{equation}
as $N \to \infty$ and
\begin{equation}\label{primitive case}
\bigg| \frac{1}{\phi(q)} \sum_{\substack{p \in \mathbb{Z}\\(p,q)=1}} f(n(p/q)a(1/q)) \psi(p/q) - \int_X f \; d \nu \int_{\mathbb{R}} \psi \; dt \bigg| \leq C_2 ||f||_{C^{d'}_b(X)} q^{-\delta_2}
\end{equation}
as $q \to \infty$, where $d' := \max(4,d)$. 
\end{theorem}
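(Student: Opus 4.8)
The plan is: first upgrade the hypothesis (\ref{initialassumption}) to a smoothly weighted continuous estimate; then pass from that integral to the discrete sum over $\{k/N:k\in\mathbb{Z}\}$ by Poisson summation, where the only surviving discretisation error is carried by the frequencies divisible by $N$; control those through the $\mathrm{ASL}$-fibre structure; and finally deduce the primitive sum (\ref{primitive case}) from the non-primitive one (\ref{non-primitive case}) by Möbius inversion over the greatest common divisor. For the weighting step, given $\psi\in C^1_c(\mathbb{R})$ write $\psi(t)=-\int_t^\infty\psi'(s)\,ds$ and interchange the order of integration, so that $\int_\mathbb{R}f(n(t)a(y))\psi(t)\,dt=-\int_\mathbb{R}\psi'(s)\big(\int_{\alpha_0}^{s}f(n(t)a(y))\,dt\big)\,ds$ with $\alpha_0$ fixed to the left of $\operatorname{supp}\psi$; applying (\ref{initialassumption}) on each interval $[\alpha_0,s]$, where the error is $O(CLy^{\delta}\|f\|_{C^d_b(X)})$ uniformly in $s\in\operatorname{supp}\psi$, gives
\[
\int_\mathbb{R}f(n(t)a(y))\psi(t)\,dt=\int_Xf\,d\nu\int_\mathbb{R}\psi\,dt+O\big(y^{\delta}\|f\|_{C^d_b(X)}\big),
\]
with implied constant depending only on $\xi|_{\operatorname{supp}\psi}$ and $\psi$.

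Next I would use the renormalisation $n(t)a(1/N)=(I,\xi(t))u(t)a(1/N)$: its $\mathrm{SL}(2,\mathbb{R})$-component is $u(t)a(1/N)=a(1/N)u(Nt)$ and its $\mathbb{R}^2$-component is $\big(\xi_1(t)N^{-1/2},\,N^{1/2}\Lambda_\xi(t)\big)$, so $n(k/N)a(1/N)=(I,\xi(k/N))a(1/N)u(k)$, and over $t\in[0,1]$ the curve $t\mapsto n(t)a(1/N)$ projects to a closed horocycle of length $\asymp N$ on $\mathrm{SL}(2,\mathbb{Z})\backslash\mathrm{SL}(2,\mathbb{R})$ while winding in the $\mathbb{R}^2$-coordinate with speed $O(N^{1/2})$ governed by $\Lambda_\xi'$. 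Poisson summation — legitimate after a harmless smoothing of the section, needed only because $\xi_1$ is assumed merely continuous — gives
\[
\frac1N\sum_{k\in\mathbb{Z}}f\big(n(k/N)a(1/N)\big)\psi(k/N)=\sum_{m\in\mathbb{Z}}I_m,\qquad I_m:=\int_\mathbb{R}f\big(n(t)a(1/N)\big)\psi(t)\,e^{-2\pi imNt}\,dt,
\]
and $I_0$ is exactly the weighted integral above, hence equals $\int_Xf\int\psi+O(N^{-\delta}\|f\|_{C^d_b(X)})$. The whole theorem then reduces to showing $\sum_{m\neq0}I_m=O(N^{-\delta_1}\|f\|_{C^{d'}_b(X)})$.

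This is the main obstacle: the twist frequency $mN$ coincides with the scale $1/N$ on which $t\mapsto f(n(t)a(1/N))$ oscillates, so the sampling is critical rather than over-critical; moreover $t\mapsto n(t)a(1/N)$ need not be differentiable (only $\Lambda_\xi$, not $\xi_1$, is $C^1$), so one cannot simply integrate by parts in $t$, and a naive recursion through the fibrewise-mean-zero Fourier sections $w\mapsto\int_0^1f(wu(r))e^{-2\pi imr}\,dr$ fails to contract. The resolution should exploit the $\mathrm{ASL}$-structure: view $X=\Gamma\backslash G$ as a $\mathbb{T}^2$-bundle over $\mathrm{SL}(2,\mathbb{Z})\backslash\mathrm{SL}(2,\mathbb{R})$ and split $f=\bar f+f^{\perp}$ into fibre average and fibrewise-mean-zero part. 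For $f^{\perp}$, the relevant (bounded) fibre modes $j$ contribute phases $e^{2\pi ijN^{1/2}\Lambda_\xi(t)}$; since $\Lambda_\xi'$ is bounded on $\operatorname{supp}\psi$ while $|m|N^{1/2}/|j|\to\infty$, the total phase $-mNt+jN^{1/2}\Lambda_\xi(t)$ has derivative $\gg|m|N$ there, and an oscillatory-integral (van der Corput type) estimate — carried out on subintervals on which the phase speed is essentially constant, so that one differentiates only $\Lambda_\xi$ and $f^{\perp}$ in its fibre and geodesic directions, never the section in $t$ — should yield a power saving in $N$. For $\bar f$ one is left with an integer-frequency twisted closed-horocycle integral of length $\asymp N$ on $\mathrm{SL}(2,\mathbb{Z})\backslash\mathrm{SL}(2,\mathbb{R})$; lifting back to $X$ and thickening by $N^{-1/2}$ in the fibre and geodesic directions realises the twist via the $\mathrm{ASL}$ torus coordinate and reduces it to the $m=0$ case of a fibre-oscillatory test function, that is, again to (\ref{initialassumption}). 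I expect the bulk of the work — and the loss to $d'=\max(4,d)$ derivatives — to come from making these twisted estimates and the thickening step compatible with the limited regularity of $\xi$.

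Finally, (\ref{primitive case}) follows from (\ref{non-primitive case}): writing $p=dp'$ with $d=(p,q)$ and $a(1/q)=a(1/(q/d))\,a(1/d)$, and setting $f_d:=f\circ R_{a(1/d)}$ (so $\|f_d\|_{C^j_b(X)}\ll d^{j/2}\|f\|_{C^j_b(X)}$ and $\int_Xf_d\,d\nu=\int_Xf\,d\nu$ by right-invariance of $\nu$), Möbius inversion gives
\[
\sum_{\substack{p\in\mathbb{Z}\\(p,q)=1}}f\big(n(p/q)a(1/q)\big)\psi(p/q)=\sum_{d\mid q}\mu(d)\sum_{m\in\mathbb{Z}}f_d\big(n(m/(q/d))a(1/(q/d))\big)\psi(m/(q/d)).
\]
Splitting the divisor sum at $d=q^{\eta}$ for $\eta>0$ small relative to $\delta_1$: for $d\le q^{\eta}$ apply (\ref{non-primitive case}) with $N=q/d$, the loss $d^{d'/2}\le q^{\eta d'/2}$ being absorbed by the choice of $\eta$; for $d>q^{\eta}$ estimate trivially, the tail being $\ll q^{-\eta}\tau(q)\|f\|_{\infty}\|\psi\|_{\infty}$. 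Using $\sum_{d\mid q}\mu(d)(q/d)=\phi(q)$ and dividing by $\phi(q)\gg q^{1-o(1)}$ then yields (\ref{primitive case}) with some $\delta_2>0$. The same inversion run in the opposite direction shows that (\ref{non-primitive case}) is conversely a consequence of (\ref{primitive case}), so the core argument of the middle two paragraphs need only be carried out for one of the two sums.
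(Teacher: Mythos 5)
Your closing Möbius-inversion step for deducing \eqref{primitive case} from \eqref{non-primitive case} is correct and is genuinely different from (and arguably cleaner than) the paper's route, which instead Fourier-decomposes $F_q(t)=f(n(t)a(1/q))\psi(t)$ on an interval and expresses the primitive sum as $\sum_k\hat F_q(k)S(q,k)$ with $S(q,k)$ Ramanujan sums, bounding each residue class by a separate twisted-sum lemma (the paper's Lemma~\ref{c independence}). One small arithmetic slip: by the adjoint bound~\eqref{AdjointBound}, $\|f\circ R_{a(1/d)}\|_{C^j_b}\ll d^{j}\|f\|_{C^j_b}$, not $d^{j/2}$; this only affects the choice of $\eta$, not the conclusion. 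The first step (smoothing the interval estimate~\eqref{initialassumption} against $\psi$ by writing $\psi(t)=-\int_t^\infty\psi'$) is also fine.

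The gap is that you have not proved \eqref{non-primitive case}, which is the heart of the theorem. The Poisson set-up is correct, and you correctly isolate the obstruction: the aliased frequencies $mN$ sit at exactly the oscillation scale of $t\mapsto f(n(t)a(1/N))$, so there is no room to win by naive stationary phase, and $t\mapsto n(t)a(1/N)$ cannot be differentiated in $t$ because only continuity of $\xi_1$ is assumed. Your proposed resolution (split $f=\bar f+f^\perp$ by $\mathbb T^2$-fibre modes, do van der Corput on $f^\perp$ by differentiating ``only $\Lambda_\xi$,'' and handle $\bar f$ by an $N^{-1/2}$-thickening in the fibre/geodesic directions) is a speculative sketch rather than an argument: you never establish the required estimate $\sum_{m\neq0}I_m=O(N^{-\delta_1}\|f\|_{C^{d'}_b})$, you cannot take a $t$-derivative of $\xi_1$ at any stage (so the van der Corput step as described does not actually run), and the thickening reduction to~\eqref{initialassumption} is neither formulated nor bounded. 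You acknowledge this yourself at the end of that paragraph. The mechanism the paper uses and which your proposal is missing is: replace each sampled value by a short $t$-average over an interval of length $N^{-1-\kappa}$ (Lemma~\ref{distancedifference} bounds the error using only boundedness of $\xi_1$ and the $a(1/N)$-contraction of the unstable fibre coordinate, precisely sidestepping the regularity problem), insert the discrete Birkhoff average $D_Mf(x)=\frac1M\sum_{m<M}f(xu(m))-\int f\,d\nu$, apply Cauchy--Schwarz to get $\int_J D_Mf\le\mathrm{Vol}(J)^{1/2}\bigl(\int|D_Mf|^2\bigr)^{1/2}$, and then apply the hypothesis~\eqref{initialassumption} to the new test function $|D_Mf|^2$, whose $L^2(X)$ integral is small by the polynomial rate of mixing of $u(t)$ (Proposition~\ref{MixingRate}). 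Without a mixing input and the two-step Cauchy--Schwarz/hypothesis-on-$|D_Mf|^2$ manoeuvre (or some replacement delivering uniform decay of the aliased Fourier modes), \eqref{non-primitive case} remains unproved, and hence so does everything downstream of it.
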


\begin{remark}
\textup{As is suggested in this theorem, the constants $C_1$ and $C_2$ should only depend upon the restriction of $\xi$ to the intervals $[\alpha,\beta]$, which in our case will be the smallest closed interval containing the support of the test functions $\psi$. Indeed, we are careful to ensure that this is the case (e.g at the start of the proof of Proposition \ref{non-primitive interval}). To avoid repeated use of cumbersome notation, we will assume this is implicitly understood and write, for example, $C = C(\xi)$ instead of $C = C(\xi|_{[\alpha,\beta]})$ in our arguments.
}
\end{remark}

As a consequence of Theorem \ref{main}, we have the following concrete effective equidistribution results in some of the cases listed above. 

For the case of \cite{strombergsson2015effective}, we say that $\xi = (\xi_1,\xi_2) \in \mathbb{R}^2$ is of Diophantine-type $K \geq 3/2$ with constant $c > 0$ if $|| \xi - q^{-1}m|| > cq^{-K} $ for all $q \in \mathbb{N}$ and $m \in \mathbb{Z}^2$. In the case of a horocycle section with $\xi$ being a fixed element of $\mathbb{R}^2$ with Diophantine type $K$, (\ref{effectivehorocycle}) holds for $\delta = \min(1/4,1/K).$ This gives us the following result.

\begin{corollary}\label{Strom Cor}
Let $n(t) := (I,\xi)u(t)$ where $\xi $ is of finite Diophantine type $K$ with constant $c>0$. Then, there exists $\delta_1 = \delta_1(K), \delta_2 = \delta_2(K) > 0$ and a constant $C = C(c,K) > 0$ such that for all $f \in C^8_b(X)$ and all compactly supported $C^1$ functions $\psi:\mathbb{R} \to \mathbb{R}$ we have that 
\begin{align}
& \bigg| \frac{1}{N} \sum_{k \in \mathbb{Z}} f(n(k/N)a(1/N)) \psi(k/N) - \int_X f \; d \nu \int_{\mathbb{R}} \psi \; dt \bigg| \nonumber \\
\leq & C (W+1)(L^{1/2} + |\xi_1| + |\xi_2| + 1)  ||f||_{C^8_b(X)} ||\psi||_{W^{1,\infty}} N^{-\delta_1} \label{non-prim error}
\end{align}
and 
\begin{align}
    & \bigg| \frac{1}{\phi(q)} \sum_{
    \substack{p \in \mathbb{Z} : \\ (p,q)=1}
    } f(n(p/q)a(1/q)) \psi(p/q) - \int_X f \; d \nu \int_{\mathbb{R}} \psi \; dt \bigg| \nonumber \\ \leq &
    C (W+1)(L^{1/2} + |\xi_1| + |\xi_2| + 1)^{1/2}  ||f||_{C^8_b(X)} ||\psi||_{W^{1,\infty}}q^{-\delta_2} \label{prim error}
\end{align}
where $L$ is the smallest real number $\geq 1$ such that \textup{supp}$(\psi) \subset [-L,L]$ and $W$ is the width of 
\textup{supp}$(\psi)$.
\end{corollary}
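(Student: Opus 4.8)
The plan is to derive Corollary \ref{Strom Cor} from Theorem \ref{main}. The structural hypotheses of Theorem \ref{main} are immediate: since $\xi$ is constant, $\xi_1$ is continuous and $\Lambda_\xi(t) = \xi_1 t + \xi_2$ is affine, hence $C^1$; and the Diophantine condition forces $\xi \notin \mathbb{Q}^2$, so $n(t)$ is not rationally linear. Thus the only real work is (i) to verify the quantitative hypothesis (\ref{initialassumption}) for $n(t) = (I,\xi)u(t)$, with $\delta = \min(1/4,1/K)$ and an implied constant of the claimed shape in $\xi$, and (ii) to track how the constants produced by Theorem \ref{main} depend on $\xi$ and on the support data of $\psi$.

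For (i) I would start from Strömbergsson's effective equidistribution of translated arcs of closed horocycles \cite{strombergsson2015effective}: for a smooth compactly supported weight $\varphi$,
$$\int_{\mathbb{R}} f(n(t)a(y))\,\varphi(t)\,dt \;=\; \int_X f\,d\nu \int_{\mathbb{R}}\varphi\,dt \;+\; O\!\left(y^{\delta}\,\|f\|_{C^{d}_b(X)}\,\|\varphi\|_{W^{k_0,\infty}}\,E(\xi,\mathrm{supp}\,\varphi)\right),$$
with $\delta = \min(1/4,1/K)$, with $d$ derivatives of $f$ for some absolute $d\le 8$ (so that $d' = \max(4,d)\le 8$ and $\|f\|_{C^{d'}_b}\le\|f\|_{C^8_b}$), and implied constant depending only on $c$ and $K$. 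The point — not explicit in \cite{strombergsson2015effective}, which treats both $\xi$ and the weight as fixed — is that a re-examination of the argument shows the geometric factor $E$ may be taken polynomial, indeed linear, in $|\xi_1|,|\xi_2|$ and in the location parameter $L$ of $\mathrm{supp}\,\varphi$ (after absorbing a translation $u(t_0)$ centring the support into a reparametrisation, and using that $a(y)$ expands an arc of length $\ell$ into one of length $\asymp \ell/y$). To pass from the smooth weight to the sharp cutoff in (\ref{initialassumption}) I would sandwich $\mathbf{1}_{[\alpha,\beta]}$ between smooth functions agreeing with it outside $\eta$-neighbourhoods of the endpoints, apply the displayed estimate to each (with $\|\varphi\|_{W^{k_0,\infty}} \asymp \eta^{-k_0}$), bound the two transition windows trivially by $O(\eta)\|f\|_\infty$, and optimise $\eta \asymp y^{\delta/(k_0+1)}$; dividing by $\beta-\alpha$ yields (\ref{initialassumption}) with exponent $\delta/(k_0+1)>0$ in place of $\delta$ and constant $C = C(c,K)\,(|\xi_1|+|\xi_2|+1)$, uniform in the interval, the explicit factor $L$ in (\ref{initialassumption}) being the one produced by $E$. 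This smooth-to-sharp passage is the form equivalent to (\ref{effectivehorocycle}) alluded to in the introduction.

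Given (i), Theorem \ref{main} applies and produces (\ref{non-primitive case}) and (\ref{primitive case}) with $\delta_1,\delta_2>0$ depending only on the exponent in (i), hence only on $K$, and with $C_1 = C_1(\xi,\psi)$, $C_2 = C_2(\xi,\psi)$. To obtain the explicit shapes (\ref{non-prim error}) and (\ref{prim error}) I would follow these constants through the proof of Theorem \ref{main}: the factor $W+1$ comes from the $\asymp WN$ lattice points $k$ with $k/N\in\mathrm{supp}\,\psi$ (equivalently, from the length of the interval over which Proposition \ref{non-primitive interval} is applied); $\|\psi\|_{W^{1,\infty}}$ from the single derivative of $\psi$ used in the smoothing/summation-by-parts step; $\|f\|_{C^8_b(X)}$ from the $d\le 8$ input in (i); and the factor $(L^{1/2}+|\xi_1|+|\xi_2|+1)$ from inserting the constant $C(c,K)(|\xi_1|+|\xi_2|+1)$ and the $L$-dependence of (\ref{initialassumption}) and following their propagation through the subdivision of the range $\asymp[-LN,LN]$. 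The primitive estimate (\ref{prim error}) is deduced inside Theorem \ref{main} from the non-primitive one by a divisor/Möbius sieving, and the square root on the $(L,\xi)$-factor there reflects the extra averaging over divisors in that step.

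The main obstacle is step (i): \cite{strombergsson2015effective} records the error qualitatively in $\xi$ and for a fixed weight, so the genuine content is re-deriving that estimate with the dependence on $|\xi_1|,|\xi_2|$ and on $\mathrm{supp}\,\varphi$ made explicit and polynomial (ideally linear), and then checking that the sharp-cutoff conversion preserves this shape. Everything afterwards is bookkeeping: substituting the explicit constant into Theorem \ref{main} and tracking how $W$, $L$, $\|\psi\|_{W^{1,\infty}}$ and $\|f\|_{C^8_b(X)}$ propagate to give (\ref{non-prim error}) and (\ref{prim error}).
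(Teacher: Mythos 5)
Your overall strategy matches the paper's: verify hypothesis (\ref{initialassumption}) via Str\"{o}mbergsson and then apply Corollaries \ref{to compact support} and \ref{to primitive compact support}, tracking the explicit constant $C'(\xi,L)$ from Proposition \ref{non-primitive interval}. However, there are two points where you diverge from what the paper actually does, and the second matters for the claimed constant shape.

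First, the paper simply cites Str\"{o}mbergsson's Theorem~1.2 as giving (\ref{initialassumption}) directly with $C = C(c,K)$, $\delta = \min(1/4,1/K)$ and $d = 8$; it does not go through a smooth-weight-to-sharp-cutoff sandwiching step, and it takes the value $d=8$ on the nose (hence $d' = \max(4,8) = 8$ and the $C^8_b$ norm in the statement). Your smooth-to-sharp passage, if it were needed, would degrade the exponent to $\delta/(k_0+1)$, which still yields some $\delta_1,\delta_2>0$ but is not what the paper invokes. This part is mostly extra labour, though it is speculative as written (``a re-examination of the argument shows \dots'') and produces a weaker exponent than the one the paper records.

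Second, and more substantively, you misattribute the source of the $|\xi_1|+|\xi_2|$ factor. You propose that (\ref{initialassumption}) holds with constant $C = C(c,K)(|\xi_1|+|\xi_2|+1)$. In the paper, $C = C(c,K)$ depends only on the Diophantine data and not otherwise on $\xi$; the $|\xi_1|+|\xi_2|$ dependence enters entirely through the geometric quantity $A(\xi)$ from Lemma \ref{distancedifference} (which for constant $\xi$ controls $\|\xi_1\|_\infty$ and the Lipschitz constant of $\Lambda_\xi(t)=\xi_1 t+\xi_2$), and it is this $A(\xi)$ that appears additively in $C'(\xi,L) = C(\xi)^{1/2}L^{1/2} + A(\xi) + 1$. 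If you instead feed your $C(\xi) = C(c,K)(|\xi_1|+|\xi_2|+1)$ into $C'(\xi,L)$, the $L^{1/2}$-term acquires an extra factor $(|\xi_1|+|\xi_2|+1)^{1/2}$, i.e.\ you would obtain a cross term $L^{1/2}(|\xi_1|+|\xi_2|+1)^{1/2}$ rather than the separated $L^{1/2} + |\xi_1|+|\xi_2|+1$ that appears in (\ref{non-prim error}) and (\ref{prim error}). So your argument would prove a version of the corollary with a strictly worse explicit constant shape, not the stated bounds. The fix is simply to note that Str\"{o}mbergsson's constant in (\ref{initialassumption}) is independent of $\xi$ beyond $(c,K)$, and that $A(\xi)\ll |\xi_1|+|\xi_2|$ handles the remaining $\xi$-dependence.
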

Note that this holds for almost every $\xi \in \mathbb{R}^2$ since the set of all $\xi$ which do not have finite Diophantine type has Hausdorff dimension $0$. 

\begin{remark}
\textup{
As stated, the error term in the primitive case of Corollary \ref{Strom Cor} is asymptotically better in $L$. However, in the non-primitive case, the error term (\ref{non-prim error}) can be replaced by (\ref{prim error}) (with $N$ in place of $q$) but this comes at the cost of having a much smaller decay constant $\delta_2$ (see Lemma \ref{c independence}).
}
\end{remark}

In the case of a periodic horocycle sections, it is easy to see that the periodicity allows us to remove any dependence on $L$ in the error term. Indeed we have the following in the setting of \cite{browning2016effective}.

\begin{corollary}\label{Brown Cor}
Let $n(t):= (I,(t/2,-t^2/4))u(t).$ There exists constants $\delta_1, \delta_2 > 0$ and a constant $C > 0$ such that for all functions $f \in C^8_b(X)$ and all compactly supported $C^1$ functions $\psi:\mathbb{R} \to \mathbb{R}$ we have 
\begin{align*}
    & \bigg| \frac{1}{N} \sum_{k \in \mathbb{Z}} f(n(k/N)a(1/N)) \psi(k/N) - \int_X f \; d \nu \int_{\mathbb{R}} \psi \; dt \bigg| \leq & C (W+1) ||f||_{C^8_b(X)} || \psi ||_{W^{1,\infty}} N^{- \delta_1}
\end{align*}
and 
\begin{align*}
    & \bigg| \frac{1}{\phi(q)} \sum_{\substack{p \in \mathbb{Z} : \\ (p,q)=1}} f(n(p/q)a(1/q)) \psi(p/q) - \int_X f \; d \nu \int_{\mathbb{R}} \psi \; dt \bigg|  \leq & C (W+1)||f||_{C^8_b(X)} || \psi ||_{W^{1,\infty}} q^{- \delta_2}
\end{align*}
where $W$ is the width of supp$(\psi)$.
\end{corollary}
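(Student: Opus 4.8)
The plan is to obtain Corollary~\ref{Brown Cor} from Theorem~\ref{main}: the real work is to verify the hypothesis~(\ref{initialassumption}) for the Elkies--McMullen section~\cite{elkies2004gaps} $n(t) = (I,(t/2,-t^2/4))u(t)$, and then to use the periodicity of this section to reduce the constants in the conclusion to the stated shape. First one checks that $n$ lies in the scope of Theorem~\ref{main}: $\xi_1(t) = t/2$ is continuous, $\Lambda_\xi(t) = t\xi_1(t) + \xi_2(t) = t^2/4$ is smooth (in particular $C^1$), and $\{t : t^2/4 = \alpha t + \beta\}$ is finite for all $\alpha,\beta\in\mathbb{R}$, so $n$ is not rationally linear. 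A short computation with the multiplication rule of $G$ shows moreover that $n(t+2) = \gamma n(t)$ for $\gamma = \bigl(\bigl(\begin{smallmatrix} 1 & 2 \\ 0 & 1\end{smallmatrix}\bigr),(1,1)\bigr) \in \Gamma$, so $n$ is periodic of period $2$.

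Next I would verify~(\ref{initialassumption}). The main theorem of Browning and Vinogradov~\cite{browning2016effective} is exactly the smoothed estimate~(\ref{effectivehorocycle}) for this $\xi$, with some $\delta>0$ and some fixed Sobolev order. To deduce a bound against a sharp cutoff over an interval $[\alpha,\beta]$, exploit periodicity: the $\Gamma$-invariance of $x\mapsto f(\Gamma x)$ and of $\nu$ together with $n(t+2)=\gamma n(t)$ give $\int_{\alpha+2j}^{\alpha+2(j+1)} f(n(t)a(y))\,dt = \int_\alpha^{\alpha+2} f(n(t)a(y))\,dt$ for every $j\in\mathbb{Z}$, so with $m := \lfloor(\beta-\alpha)/2\rfloor$ the integral over $[\alpha,\beta]$ is $m$ copies of the integral over $[\alpha,\alpha+2]$ plus one integral over a leftover interval of length $<2$. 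Each of these finitely many pieces is handled by sandwiching the relevant indicator between smooth bumps $\psi_\varepsilon^\pm$ with $\|\psi_\varepsilon^\pm\|_{W^{d,p}} \ll \varepsilon^{-d}$, applying~(\ref{effectivehorocycle}), and optimising $\varepsilon$ as a fixed power of $y$; since one period's integral is $O_\xi(y^\delta\|f\|_{C^d_b})$ with an absolute constant, and $\beta-\alpha \le 2L$, this produces~(\ref{initialassumption}) with a (possibly smaller) exponent, a fixed Sobolev order, and an absolute constant $C=C(\xi)$.

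Finally, I would apply Theorem~\ref{main}: it yields~(\ref{non-primitive case}) and~(\ref{primitive case}) with $d'=\max(4,d)$ (here $8$) and with constants $C_1(\xi,\psi)$, $C_2(\xi,\psi)$. To reach the stated form one uses the periodicity once more: translating $\psi$ by a suitable even integer replaces $\psi(t)$ by $\psi(t-2k)$, which changes neither the sums in~(\ref{non-primitive case}) and~(\ref{primitive case}) --- because $f$ descends to $\Gamma\backslash G$ and $n(t+2k)=\gamma^k n(t)$, while $(p,q)=(p+2kq,q)$ --- nor $\int_\mathbb{R}\psi$, so we may assume $\operatorname{supp}\psi \subset [0, W+2]$ and hence $L = \max(|\alpha|,|\beta|,1) \le W+2$. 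Unwinding the proof of Theorem~\ref{main}, the constants $C_1,C_2$ depend on $\psi$ only through $L$, the width $W$, and $\|\psi\|_{W^{1,\infty}}$ --- the last because the argument differentiates $\psi$ once when comparing a Riemann sum with its integral --- and on $\xi$ only through quantities that are absolute constants for this fixed quadratic section; with $L\le W+2$ this collapses to $C_1,C_2 \ll (W+1)\|\psi\|_{W^{1,\infty}}$, as claimed. \textbf{The main obstacle} is precisely this last bookkeeping: one must verify, place by place in the proof of Theorem~\ref{main}, that after the normalisation $\operatorname{supp}\psi\subset[0,W+2]$ the dependence on $W$ is only linear and no hidden growth in $L$ survives for the section $\xi(t)=(t/2,-t^2/4)$; the smoothing argument of the second step, though routine, must also be done keeping careful track of the $L$-dependence.
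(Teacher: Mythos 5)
Your overall plan (verify that $n(t)=(I,(t/2,-t^2/4))u(t)$ falls under Theorem~\ref{main}, then exploit the period--$2$ structure to remove the $L$-dependence) is sound in outline and matches the spirit of the paper. The verification that the section is periodic of period $2$, that the sums in~(\ref{non-primitive case}) and~(\ref{primitive case}) are invariant under replacing $\psi(t)$ by $\psi(t-2j)$, and the sketch for upgrading~(\ref{effectivehorocycle}) to the sharp-cutoff hypothesis~(\ref{initialassumption}) are all fine. But the final step, where you normalise $\operatorname{supp}\psi\subset[0,W+2]$ by a single global translation and then claim the constants collapse to $(W+1)$, has a genuine gap.

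The problem is that after the global translation the constants you are tracking are evaluated on the whole interval $[0,W+2]$, and for this quadratic section they grow with $W$. Concretely, the error constant coming out of Corollary~\ref{to compact support} is $(W+1)\,C'(\xi,L)$ with $C'(\xi,L)=C(\xi)^{1/2}L^{1/2}+A(\xi)+1$, and $A(\xi)=\|\xi_1\|_{L^\infty[\alpha,\beta]}+L_\xi$. On $[0,W+2]$ one has $\|\xi_1\|_{L^\infty}=(W+2)/2$ and, since $\Lambda_\xi(t)=t^2/4$, a Lipschitz constant $L_\xi\asymp W$; so $A(\xi)\asymp W$ and $L\asymp W$, hence $C'(\xi,L)\asymp W$ and the bound is at least quadratic in $(W+1)$. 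Your remark that the $\xi$-dependent quantities are ``absolute constants for this fixed quadratic section'' is exactly what fails: they are absolute only once the interval is fixed, and your normalisation leaves the interval growing with $W$.

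The paper avoids this by a finer use of periodicity: it fixes a smooth partition of unity $\sum_j\Delta(t-2j)=1$ with $\operatorname{supp}\Delta\subset(-2,2)$, writes $\psi=\sum_j\psi\Delta_j$, and translates each piece \emph{separately} back to $(-2,2)$ using $n(t+2j)=\gamma^j n(t)$ and $\Gamma$-invariance of $f$ (and, in the primitive case, the bijection $p\mapsto p+2jq$ on residues coprime to $q$). Each translated piece has support in a fixed interval, so $L$, $A(\xi)$ and $C(\xi)$ are absolute constants for every piece, while $\|\psi\Delta_j\|_{W^{1,\infty}}\ll\|\psi\|_{W^{1,\infty}}$ by the product rule. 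Summing the resulting errors over the $\ll W+1$ indices $j$ with $\operatorname{supp}\Delta_j\cap\operatorname{supp}\psi\neq\emptyset$ then gives the linear $(W+1)$ dependence claimed in the corollary. Replacing your single global translation with this local partition-and-translate argument closes the gap.
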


The proof of these Corollaries are found in §\ref{concreteEquidistribution}.

\begin{remark}
\textup{
When applied to functions $f \in C^8_b(X)$, which are invariant under the right action of the group $\{I\} \times \mathbb{R}^2 \leq G$, we recover an effective equidistribution result for the points 
$$
\bigg\{
\bigg( \begin{pmatrix}
1 & p/q \\
0 & 1
\end{pmatrix}
\begin{pmatrix}
q^{-1/2} & 0 \\
0 & q^{1/2}
\end{pmatrix}, \frac{p}{q} \bigg)
: (p,q) = 1 \bigg\} 
$$
in SL$(2,\mathbb{Z}) \backslash$ SL$(2,\mathbb{R}) \times \mathbb{R} / \mathbb{Z}$. Such a result was first shown in \cite{einsiedler2021primitive} using $S$-arithmetic extensions of the space SL$(2,\mathbb{Z}) \backslash$ SL$(2,\mathbb{R}) \times \mathbb{R} / \mathbb{Z}$. Our argument avoids the need to use such spaces and is hence shorter, although is it therefore less general. 
More recently Jana, generalizing \cite{burrin2021translates}[Theorem 1.1], has shown an effective equidistribution result for such points with the optimal equidistribution rate of $q^{1/2-\theta - \epsilon}$, where $\theta = 7/64$ is the current best known bound towards the Ramanujan conjecture (\cite{jana2021joint}). This uses spectral theory on the space SL$(2,\mathbb{Z}) \backslash$ SL$(2,\mathbb{R})$. It would be interesting to see if the rate of equidistribution in Corollaries \ref{Strom Cor} and \ref{Brown Cor} could be improved by using such techniques. 
}
\end{remark}

\section{Preliminaries: Sobolev norms}\label{Sobolevnorms}
For $1 \leq p \leq \infty$, $k \in \mathbb{N}_0$ and a $C^d$ function $\psi : \mathbb{R} \to \mathbb{R}$ we define its $L^p$ Sobolev norm of order $d$ by
$$
||\psi||_{W^{d,p}} := \sum_{j = 0}^d ||\psi^{(j)}||_p \in [0,\infty].
$$
As is well known from the theory of Sobolev spaces, this norm can be extended to a much larger class of functions. However, for the purposes of this paper, we merely remark that it is also defined for functions $\psi$ which are piecewise $C^d$ as such functions derivatives are defined almost everywhere.

Let $\mathfrak{g} := \mathfrak{sl}(2,\mathbb{R}) \oplus \mathbb{R}^2 $ be the Lie algebra of $G$. Every element $A \in \mathfrak{g}$ defines a 
differential operator on $C^{\infty}(X)$ via the equation
$$
Af(g) := \frac{d}{dt} f(g \exp(At)) \big\vert_{t=0}.
$$
Now, we choose the basis $\{X_i \}_{i=1}^5$ of $\mathfrak{g}$ where
$$
X_1 =  (X, \mathrm{0}), \;
X_2 = (Y, \mathbf{0}), \;
X_3 = (Z, \mathbf{0}), \;
X_4 = (\mathbf{0}, (1,0)), \text{ and } 
X_5 = (\mathbf{0}, (0,1)) \;
$$
and 
$$
X = \begin{pmatrix}
0 & 1 \\
0 & 0
\end{pmatrix},
Y = \begin{pmatrix}
0 & 0 \\
1 & 0
\end{pmatrix}
\text{ and }
Z = \begin{pmatrix}
1 & 0 \\
0 & -1
\end{pmatrix}.
$$
Let $\mathcal{M}_d$ be the set of monomials $\{ X_i\}_{i=1}^5$ of degree at most $d$. For a function $f \in C^{\infty}(X) $, we define its $(L^{\infty})$ Sobolev norm of degree $d$ by 
$$
||f||_{C_b^d} := \sum_{D \in \mathcal{M}_d} ||Df||_{\infty}.
$$
We let $C_b^d(X)$ be the space of all $d$ times differentiable functions $f:X \to \mathbb{R}$ whose $|| \cdot ||_{C_b^d}$-norm is finite. 

These Sobolev norms have the following properties, discussed in greater generality in \cite{einsiedler2021primitive}[§3]:
\begin{itemize}
    \item (Nesting) For any $d \in \mathbb{N}$ and $f \in C_b^d(X)$ we have that $f \in C_b^{d'}(X)$ for any $d' \leq d$ and 
    \begin{equation}\label{Nesting}
        || f ||_{C_b^{d'}} \leq || f ||_{C_b^{d}}
    \end{equation}
    \item (Product Rule) For any $d \in \mathbb{N}$ and $f_1,f_2 \in C_b^d(X)$ we have that $f_1,f_2 \in C_b^d(X)$ with 
    \begin{equation}\label{ProductRule}
        ||f_1 f_2 ||_{C^d_b} \ll_d  ||f_1 ||_{C^d_b} ||f_2 ||_{C^d_b}
    \end{equation}
    \item (Distance bound) For all $d \geq 1$, $f \in C_b^d(X)$ and $g \in G$ we have that 
    \begin{equation}\label{DistanceBound}
         ||g \cdot f - f||_{\infty} \ll \rho(g,e) ||f||_{C^1_b(X)}.
    \end{equation}
    where $(g \cdot f)(x) := f(xg)$ and $e \in G$ is the identity.
    \item (Adjoint bound) For all $d \in \mathbb{N}$ , $f \in C_b^d(X)$ and $g \in G$ we have that 
    \begin{equation}\label{AdjointBound}
         ||g \cdot f ||_{C^d_b(X)} \ll_d |||g|||^{2d} ||f||_{C^d_b(X)}
    \end{equation}
    where $|||g|||$ is the maximal value of the entries of $g$ and $g^{-1}$.
\end{itemize}

\begin{proof}[Proof of properties]
Nesting (\ref{Nesting}) is immediate from the definition of Sobolev norms. 

In terms of the product rule (\ref{ProductRule}), we have that the standard product rule for differentiation tells us that, whenever $A \in \mathfrak{g}$ and $f_1,f_2 \in C_b^d(X)$ 
$$
A(f_1 f_2) = A(f_1)f_2 + f_1 A(f_2).
$$
Therefore, for any $D \in \mathcal{M}_d$, $D(f_1f_2)$ is a sum of terms of the form $D_1(f_1)D_2(f_2)$ where $D_1,D_2$ are monomials whose degrees sum to $d$. Hence, $||D(f_1 f_2)||_{\infty} \ll_d ||f_1||_{C^b_d} ||f_2||_{C^b_d}$ and so, summing over all $D \in \mathcal{M}_d$, the claimed result follows.

For the distance bound (\ref{DistanceBound}), let $g \in G$ and $w:[0,\rho(g,e)] \to G$ be unit speed path of shortest length between $e$ and $g$. By the fundamental theorem of calculus, for any $h \in G$ and $f \in C_b^d(X)$, 
\begin{equation}\label{FTC}
   (g \cdot f)(h) - f(h) = \int_0^{\rho(g,e)} \frac{d}{ds} f(hw(s)) |_{s=t} \; dt. 
\end{equation}
Now, $\frac{d}{dt} f(hw(s)) |_{s=t} = [Af](hw(t))$ for $A = A(t) = \frac{d}{ds} w(t)^{-1} w(t+s) |_{s=0} \in \mathfrak{g}$. Since the path $w$ is of unit speed, $\rho(w(t)^{-1}w(s+t),e) = s$. So, $A(t)$'s entries are bounded and hence it is a linear combination $\sum_{i=1}^5 \alpha_i X_i $ of the  $X_i$ with coefficients $\alpha_i$ bounded uniformly in $t$ and $g$. Thus, 
$$
\bigg|\frac{d}{ds} f(hw(s)) |_{s=t} \bigg| \ll ||f||_{C_b^1}
$$
for all $t \in [0,\rho(g,e)]$. The result follows from plugging this absolute value bound into (\ref{FTC}). 

Finally, for the adjoint bound (\ref{AdjointBound}), note that whenever $A \in \mathcal{g}$, $f \in C_b^1(X)$ and $g,h \in G$ we have that 
\begin{align*}
    A(g\cdot f)(h) & = \frac{d}{dt} f(h \exp(tA)g) \\
    & = \frac{d}{dt} f(hg \exp(tg^{-1}Ag)) \\
    & = (g \cdot [g^{-1} A g]f )(h).
\end{align*}
For $g \in \{ X_i \}_{i=1}^5$, the size of the entries $g^{-1}Ag$ will be bounded, up to a constant, by $|||g|||^2$. So, when $g^{-1}Ag$ is written as a linear combination of the $X_i$, the coefficients will be bounded, up to a constant, by $|||g|||^2$. More generally, when $D = A_1 \cdot \dotsc \cdot A_d \in \mathcal{M}_d$, we have that
$$
D(g \cdot f) = g \cdot [ (g^{-1}A_1g) \cdot \dotsc \cdot (g^{-1} A_d g) ]f 
$$
and $g^{-1} A_1 g \cdot \dotsc \cdot g^{-1} A_d g$ can be written as a sum of monomials in $\mathcal{M}_d$ whose coefficients are bounded, up to a constant, by $(|||g|||^2)^d = |||g|||^{2d}. $

\end{proof}

\section{The non-primitive case}

In \cite{pattison2021rational}, Theorem \ref{original} was shown using the mixing properties of the flow defined by the right action of the one-parameter subgroup $u(t)$, together with \cite{elkies2004gaps}[Theorem 2.2].
To extend this result to an effective version, we show that this flow has a polynomial rate of mixing, as is outlined by Str\"{o}mbergsson in \cite{strombergsson2015effective}[Remark 11.1].

\begin{proposition}\label{MixingRate}
For any $f,h \in C_b^{4}(X)$ we have that
$$
\int_X (u(t) \cdot f) \; h \; d\nu = \int_X f \; d\nu \int_X h \; d \nu + O_{\epsilon}((1+t)^{-1+ \epsilon} ||f||_{C_b^4}||h||_{C_b^4})
$$
\end{proposition}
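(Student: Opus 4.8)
The plan is to reduce the mixing estimate to a matrix-coefficient bound on $L^2_0(X) := \{f \in L^2(X) : \int_X f\,d\nu = 0\}$, exactly as in the classical SL$(2,\mathbb{R})$ setting but keeping track of the extra abelian factor $\mathbb{R}^2$. First I would split off the constant parts, writing $f = \int_X f\,d\nu + f_0$ and $h = \int_X h\,d\nu + h_0$ with $f_0, h_0 \in L^2_0(X)$; the cross-terms involving exactly one constant vanish by $G$-invariance of $\nu$, so it remains to show $\langle u(t)\cdot f_0, h_0\rangle = O_\epsilon((1+t)^{-1+\epsilon}\|f_0\|_{C^4_b}\|h_0\|_{C^4_b})$. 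The decomposition of $L^2(X)$ under $G$ contains, besides the trivial representation, (i) representations factoring through SL$(2,\mathbb{R})$ (the ``old'' spectrum pulled back from SL$(2,\mathbb{Z})\backslash$SL$(2,\mathbb{R})$, which has a spectral gap) and (ii) representations on which the translation subgroup $\{I\}\times\mathbb{R}^2$ acts nontrivially. For family (i) the standard effective decay of horocycle matrix coefficients in a spectrally gapped unitary representation of SL$(2,\mathbb{R})$ gives a bound $\ll (1+t)^{-1+\epsilon}$ after using the $C^2_b$ norms (via the usual $K$-finite / smooth-vector reduction). For family (ii), one uses that on the $\mathbb{R}^2$-isotypic pieces with nonzero character $\chi_v$, the operators $u(t)$ conjugate $\chi_v$ to $\chi_{v\cdot u(t)}$ and the vectors become ``transverse'' to one another; this is most cleanly seen via the Fourier expansion in the $\mathbb{R}^2$ variable (theta-like coefficients), where $u(t)$ acts on the frequency lattice by the linear map $u(t)$ and one gets decay from non-stationary phase / the fact that nonzero lattice frequencies move.

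Concretely, I would realize $X = \Gamma\backslash G$ fibered over $Y := \text{SL}(2,\mathbb{Z})\backslash\text{SL}(2,\mathbb{R})$ with fibers $\mathbb{T}^2$, and expand $f_0, h_0$ in a Fourier series along the torus fibers: $f_0(g,x) = \sum_{v\in\mathbb{Z}^2} \hat f_v(g)\, e(\langle v, x\rangle)$ and similarly for $h_0$, where the $\hat f_v$ are (sections that are) functions on an appropriate cover of $Y$. The $v=0$ term is precisely the contribution from family (i) above and is handled by the SL$(2,\mathbb{R})$ horocycle mixing with spectral gap. For the $v\neq 0$ terms, acting by $u(t)$ transforms the $x$-variable and hence multiplies the $v$-th Fourier mode's phase, effectively shifting $v \mapsto v u(t)^{-1}$ (or the transpose, depending on conventions); pairing with $h_0$ forces a diophantine matching condition on frequencies that, combined with integration by parts in $t$ using that $f_0, h_0 \in C^4_b$, yields decay. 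Summing over $v$ converges because the $C^4_b$ norms control $\sum_v (1+|v|)^{4}\,(\text{stuff})$ — this is where having four derivatives is used, and it is also why the hypothesis is stated with $C^4_b$ rather than $C^1_b$ or $C^2_b$.

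The main obstacle I anticipate is obtaining the \emph{uniform} polynomial rate over all Fourier modes $v\neq 0$ simultaneously and summing them, rather than the mixing for any single mode. One has to be careful that the implied constant in the decay for the $v$-th mode does not grow too fast in $|v|$ (at worst polynomially), so that the $C^4_b$ (equivalently, enough Sobolev) control on $f_0,h_0$ absorbs it; this is the reason the statement loses an $\epsilon$ and needs order-$4$ norms. A secondary technical point is the passage from the abstract representation-theoretic decay (which naturally gives a bound in terms of Sobolev norms defined via the Casimir/Laplace operator) to the $\|\cdot\|_{C^4_b}$ norms used here — this is routine via comparison of Sobolev norms, using that $C^4_b \hookrightarrow$ the relevant $L^2$-Sobolev space on the (non-compact) space $X$, but it requires that $f,h$ be genuinely globally $C^4_b$ as assumed. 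Everything else — the vanishing of cross terms, the Fourier expansion along fibers, the $v=0$ horocycle estimate — is standard and I would only sketch it.
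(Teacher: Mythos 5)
Your proposal takes a genuinely different route from the paper's proof, which is very short. The paper writes $u(t) = k(\theta_1)\Phi(2\sinh^{-1} t)k(\theta_2)$ (a Cartan-type decomposition borrowed from Ratner), cites Edwards' effective mixing of the \emph{geodesic} flow $\Phi(s)$ on $X$ with respect to $C^4_b$ norms (rate $s e^{-s/2}$), moves the two rotations $k(\theta_i)$ onto $f$ and $h$ via the adjoint bound (\ref{AdjointBound}) with implied constant uniform over the compact group $K$, and converts the exponential rate in $s$ to the polynomial rate $(1+t)^{-1+\epsilon}$ using $\sinh^{-1}(t)\sim\log t$. What you propose is, in effect, to re-prove effective mixing directly for the horocycle flow by harmonic analysis along the torus fibers of $X \to \mathrm{SL}(2,\mathbb{Z})\backslash\mathrm{SL}(2,\mathbb{R})$; that is the substance of what Edwards establishes, so it could work in principle, but it is substantially longer and is precisely the work the paper deliberately delegates to a citation.

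There is also a genuine gap in the mechanism you sketch for the $v\neq 0$ Fourier modes. Writing $u_1(t)$ for the $\mathrm{SL}(2,\mathbb{R})$ component of $u(t)$ and expanding along the natural fiber torus $\mathbb{R}^2/\mathbb{Z}^2 M$, one has $f(M,x)=\sum_{v\in\mathbb{Z}^2}\hat f_v(M)\,e(\langle v, xM^{-1}\rangle)$. Right translation by $u(t)$ replaces $(M,x)$ by $(Mu_1(t),\,xu_1(t))$, and therefore $xM^{-1}$ is \emph{unchanged}: the Fourier index does not shift, and $\langle u(t)\cdot f, h\rangle$ pairs $\hat f_v(\cdot\, u_1(t))$ against $\hat h_v$ mode by mode with no inter-mode cancellation and no non-stationary phase. (Equivalently, in the fiber coordinate $y = xM^{-1}$ the horocycle flow is visibly trivial on the torus.) The decay for $v\neq 0$ is real but comes from a different source: the coefficients satisfy the equivariance $\hat f_v(\gamma M)=\hat f_{v(\gamma^{-1})^{T}}(M)$ for $\gamma\in\mathrm{SL}(2,\mathbb{Z})$, so each orbit of modes is governed by a single function on a cover $\mathrm{Stab}(v)\backslash\mathrm{SL}(2,\mathbb{R})$ of the modular surface, and one needs decay of $\mathrm{SL}(2,\mathbb{R})$ matrix coefficients in the corresponding (theta-type) representations. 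That is a legitimate route, but it is not the argument you wrote down, and the ``frequencies move under $u(t)$, hence cancellation'' step as stated would not give you the estimate.
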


\begin{proof}
Let $f,h \in C_b^4(X)$ and $t \in \mathbb{R}$.
For $\theta \in \mathbb{R}$  we set $$
k(\theta) := \bigg(\begin{pmatrix}
\cos(\theta) & -\sin(\theta) \\
\sin(\theta) & \cos(\theta) 
\end{pmatrix}, (0,0) \bigg).
$$
As in \cite{ratner1987rate}[Lemma 2.3], we can find $\theta_1 = \theta_1(t), \theta_2 = \theta_2(t) \in \mathbb{R}$ such that $u(t) = k(\theta_1) \Phi(2 \sinh^{-1}(t)) k(\theta_2)$. Now, the flow given by the right action of the one-parameter subgroup $\Phi(t)$ is exponentially mixing. Indeed, by \cite{edwards2013rate} we know that 
$$
\int_X f_1(x) f_2(x \Phi(s)) \; d \nu(x) = \int_X f_1 \; d \nu \int_X f_2 \; d \nu + O(se^{-s/2} ||f_1||_{C^4_b} ||f_2||_{C^4_b} )
$$
for any $f_1, f_2 \in C_b^d(X)$. Setting $f_1 = k(-\theta_1)\cdot h$, $f_2 = k(\theta_2)\cdot f$, $s = 2 \sinh^{-1}(t)$ and using the right invariance of the measure $\nu$ we obtain that 
\begin{align*}
    \int_X  f(x k(\theta_1) \Phi(2 \sinh^{-1}(t)) k(\theta_2)) h(x) \; d \nu(x) =  \int_X f \; d \nu \int_X h \; d \nu \\ +
     O(\sinh^{-1}(t) e^{-\sinh^{-1}(t)} ||k(-\theta_1) \cdot h||_{C^4_b} ||k(\theta_2) \cdot f||_{C^4_b})
\end{align*}
Note that, by the compactness of the group $K:= \{ k(\theta): \theta \in \mathbb{R}\} \leq G$, the implicit constant in the bound (\ref{AdjointBound}) for elements $g \in K$ can be made independent of the choice of $g$. Using this, the fact that $\sinh^{-1}(t) \sim \ln{t} $ as $t \to \infty$ and that $u(t) = k(\theta_2) \Phi(2 \sinh^{-1}(t)) k(\theta_1)$ we get the desired error term.
\end{proof}

To prove Theorem \ref{main} we first show the result in the non-primitive case. Following \cite{einsiedler2021primitive}, for a function $f \in C_b^d(X)$ and $M \in \mathbb{N}$, we define the discrepancy 
\begin{equation}
    D_Mf(x) := \frac{1}{M} \sum_{m=0}^{M-1} f(xu(m)) - \int_X f \; d \nu.
\end{equation}

This is relevant in our setting as moving from $n(k/N)a(1/N)$ to $n((k+m)/N)a(1/N)$ corresponds roughly to right multiplying by $u(m)$, as the next lemma shows.

\begin{lemma}\label{distancedifference}
There exists a constant $c > 0$ such that the following holds.
Let $n(t)= (1,\xi(t))u(t)$ be a horocycle section with $\xi_1$ continuous and $\Lambda_{\xi}$ Lipschitz. Let $\alpha < \beta$, $t \in \mathbb{R}$ and $N,m > 0$ be real numbers with $t,t+m/N \in [\alpha,\beta]$. Let $A(\xi) := ||\xi_1||_{L^{\infty}[\alpha,\beta]} + L_{\xi}$ where
$L_{\xi}$ is a Lipschitz constant for $\Lambda_{\xi}|_{[\alpha,\beta]}$. Then 
\begin{equation}\label{distancedifference1}
    \rho(n(t)a(1/N)u(m), n(t+m/N)a(1/N)) \ll (1 + m)N^{-1/2} A(\xi)
\end{equation}
provided that $(1 + m)N^{-1/2} A(\xi) \leq c$.

Similarly, if $\alpha < \beta$, $s,t \in [\alpha,\beta]$ and $N > 0$ then we have that
\begin{equation}\label{distancedifference2}
    \rho(n(s)a(1/N),n(t)a(1/N)) \ll N|s-t| + (1 + |s-t|)A(\xi)N^{-1/2} ,
\end{equation}
provided that $N|s-t| + (1 + |s-t|)A(\xi)N^{-1/2} \leq c$.
\begin{proof}
For the first bound, writing 
\begin{align*}
    n(t+m/N)a(1/N) &= (I,\xi(t+m/N))u(t+m/N)a(1/N) \\
    n(t)a(1/N)u(m) &= (I,\xi(t))u(t+m/N)a(1/N)
\end{align*}
 and using left invariance, we have that
\begin{align*}
    & \rho(n(t)a(1/N)u(m), n(t+m/N)a(1/N)) \\
    = & \rho((I,\xi(t+m/N)-\xi(t)) n(t+m/N)a(1/N)), n(t+m/N)a(1/N)) \\
    = & \rho((I, (\xi(t+m/N)-\xi(t))n(t+m/N)a(1/N)) ,(I,(0,0))).
\end{align*}
We then calculate that 
\begin{align*}
    & (\xi(t+m/N)-\xi(t))n(t+m/N)a(1/N) \\= & ( (\xi_1(t+m/N)-\xi_1(t))N^{-1/2} , \\ &( (t+m/N)(\xi_1(t+m/N)- \xi_1(t)) + \xi_2(t+m/N) - \xi_2(t) )N^{1/2} ).
\end{align*}
The first component has absolute value at most $2 ||\xi_1||_{L^{\infty}[\alpha, \beta]} N^{-1/2}$ and the second, using the Lipschitz condition on $\xi_2$, has absolute value bounded above by $(2L_{\xi} + ||\xi_1||_{L^{\infty}[\alpha, \beta]})mN^{-1/2}$. 
Using that there is a neighbourhood of $(I,(0,0)) \in G$ around which $\rho$ is Lipschitz equivalent to metric on $G$ induced from, say, the $L^{\infty}$-norm on $\mathbb{R}^6 \cong M_{2 \times 2}(\mathbb{R}) \times \mathbb{R}^2$ (\cite{einsiedler2013ergodic}[Lemma 9.12]), we can find a constant $c > 0$ such that, provided that $$
||\xi_1||_{L^{\infty}[\alpha, \beta]} N^{-1/2} + (2L_{\xi} + ||\xi_1||_{L^{\infty}[\alpha, \beta]})mN^{-1/2} \leq 2 A(\xi)(1+m)N^{-1/2} \leq 2c
$$
we have 
$$
\rho(n(t)a(1/N)u(m), n(t+m/N)a(1/N)) \ll A(\xi)(1 + m)N^{-1/2}.
$$

The proof of the second bound is similar to the first. However, now we calculate that
\begin{align*}
    & \rho(n(s)a(1/N),n(t)a(1/N)) \\
    = & \rho(u(N(s-t)), (\xi(s)-\xi(t)) n(s)a(1/N)),(I,(0,0)).
\end{align*}
So we find that $$
|| (\xi(s)-\xi(t)) n(s)a(1/N)) ||_{\infty} \ll N^{-1/2} (1 + |s-t|)(||\xi_1||_{L^{\infty}[\alpha,\beta]} + L_{\xi}).
$$
Together with the fact that, for any matrix norm $|| \cdot ||$ we have
$
|| n(N(s-t))- I || \ll N|s-t|,
$
the result follows as above.
\end{proof}
\end{lemma}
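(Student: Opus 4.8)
The plan is to reduce each of the two distances to the distance from the identity of an explicit element of $G$, using the left-invariance of $\rho$, and then to convert that into a bound on $\rho$ via \cite{einsiedler2013ergodic}[Lemma 9.12], which says that on some fixed neighbourhood of the identity $\rho$ is Lipschitz-equivalent to the restriction of, say, the $L^{\infty}$-norm under the identification $M_{2\times 2}(\mathbb{R})\times\mathbb{R}^{2}\cong\mathbb{R}^{6}$. This is precisely where the hypotheses ``provided that $\dots\le c$'' enter: the constant $c$ will be taken small enough that the relevant element lies in that neighbourhood. The only algebraic input is the commutation relation $a(y)u(r)=u(ry)a(y)$ (equivalently $a(N)u(r)a(N)^{-1}=u(rN)$), which is immediate from multiplying out $2\times 2$ matrices.

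For (\ref{distancedifference1}) I would first observe that, since $a(1/N)u(m)=u(m/N)a(1/N)$ and $u$ is a one-parameter subgroup, $n(t)a(1/N)u(m)=(I,\xi(t))\,u(t+m/N)a(1/N)$; writing $P:=n(t+m/N)a(1/N)=(I,\xi(t+m/N))\,u(t+m/N)a(1/N)$ one then has $n(t)a(1/N)u(m)=(I,\xi(t)-\xi(t+m/N))\,P$. By left-invariance the distance equals $\rho(P^{-1}(I,\xi(t)-\xi(t+m/N))P,e)=\rho\big((I,(\xi(t)-\xi(t+m/N))M),e\big)$, where $M$ is the $\mathrm{SL}(2,\mathbb{R})$-part of $P$, i.e. the upper-triangular matrix with diagonal $(N^{-1/2},N^{1/2})$ and upper-right entry $(t+m/N)N^{1/2}$. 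Expanding the row-vector--matrix product, the first coordinate is $(\xi_1(t)-\xi_1(t+m/N))N^{-1/2}$, of size $\ll\|\xi_1\|_{L^{\infty}[\alpha,\beta]}N^{-1/2}$. The second coordinate naively grows linearly in $t$, and this is the crux: using the defining relation $\Lambda_{\xi}(s)=s\xi_1(s)+\xi_2(s)$ it collapses to $\big(\Lambda_{\xi}(t+m/N)-\Lambda_{\xi}(t)-(m/N)\xi_1(t)\big)N^{1/2}$, so the Lipschitz bound for $\Lambda_{\xi}$ removes the linear growth and leaves $\ll mA(\xi)N^{-1/2}$. Hence the $L^{\infty}$-norm of the translation vector is $\ll(1+m)A(\xi)N^{-1/2}$, and if this is $\le c$ for a suitably small absolute $c$, Lemma 9.12 yields the stated $\rho$-bound.

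For (\ref{distancedifference2}) the only new feature is that $n(s)$ and $n(t)$ now have different unipotent parts. I would left-translate by $(n(t)a(1/N))^{-1}$, commute the $(I,\cdot)$ and $u(\cdot)$ factors past one another, and use $a(N)u(s-t)a(1/N)=u(N(s-t))$ to arrive at $\rho(n(s)a(1/N),n(t)a(1/N))=\rho\big(u(N(s-t))\,(I,v),e\big)$, where $v=(\xi(s)-\xi(t))M'$ and $M'$ is the $\mathrm{SL}(2,\mathbb{R})$-part of $n(s)a(1/N)$. The factor $u(N(s-t))$ is at $L^{\infty}$-distance $\ll N|s-t|$ from the identity; the vector $v$ has first coordinate $\ll\|\xi_1\|_{L^{\infty}[\alpha,\beta]}N^{-1/2}$ and, rewriting through $\Lambda_{\xi}$ as above, second coordinate $\big(\Lambda_{\xi}(s)-\Lambda_{\xi}(t)-(s-t)\xi_1(t)\big)N^{1/2}$, of size $\ll|s-t|A(\xi)N^{1/2}$; the surplus power of $N$ is absorbed via $|s-t|N^{1/2}=(N|s-t|)N^{-1/2}\le cN^{-1/2}$, valid under the standing smallness hypothesis. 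Summing these contributions and applying Lemma 9.12 once more gives the claim.

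I expect the only genuine difficulty to be the second coordinate of the translation vector in each bound. A term-by-term estimate there loses a factor growing linearly in the base point, which is unbounded on a long interval $[\alpha,\beta]$ and, crucially, is \emph{not} controlled by the hypotheses, since $A(\xi)$ involves only $\|\xi_1\|_{L^{\infty}[\alpha,\beta]}$ and the Lipschitz constant of $\Lambda_{\xi}$, not $\|\xi_2\|_{L^{\infty}}$. The remedy is exactly to express everything through $\Lambda_{\xi}$ so that only $L_{\xi}$ and $\|\xi_1\|_{L^{\infty}}$ survive, and then to use the smallness assumption both to trade the extra factor of $N$ and to remain inside the neighbourhood of the identity on which $\rho$ is comparable to the coordinate norm; this is elementary but is where essentially all the bookkeeping sits.
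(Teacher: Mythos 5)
Your proposal is correct and follows essentially the same route as the paper: reduce each distance by left-invariance to the distance from the identity of an explicit group element, compute the translation vector, rewrite its second coordinate through $\Lambda_{\xi}$ so that only $L_{\xi}$ and $\|\xi_1\|_{L^\infty}$ survive, and invoke the local Lipschitz equivalence of $\rho$ with the coordinate $L^\infty$-norm. Two minor remarks where your write-up is actually slightly more careful than the paper's: (i) you make the $\Lambda_{\xi}$-substitution explicit (the paper writes ``using the Lipschitz condition on $\xi_2$,'' but $\xi_2$ is not assumed Lipschitz, only $\Lambda_\xi$ is, so that phrase is best read as shorthand for exactly the identity $\Lambda_\xi(s)-\Lambda_\xi(t)-(s-t)\xi_1(t)$ you display); and (ii) for (\ref{distancedifference2}) you correctly flag that the raw bound on the second coordinate carries a surplus $N^{1/2}$ and that the smallness hypothesis $N|s-t|\le c$ is what converts it into the claimed $N^{-1/2}$-type bound — a step the paper's intermediate inequality takes for granted.
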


In the following proposition, which is adapted from \cite{einsiedler2021primitive}[Proposition 5.3], we use the notation $[a,b]_{\mathbb{Z}}$ to denote all integers in the interval $[a,b]$. 

\begin{proposition}\label{non-primitive interval}
Suppose that $n(t) = (I,\xi(t))u(t)$ satisfies the conditions of Theorem \ref{main} for some $\delta > 0$, $d \in \mathbb{N}$ and $C = C(\xi)$. Then, there exists $\delta' > 0$ such that for any $\alpha < \beta$ with $\beta - \alpha \leq 1$ and $f \in C_b^{d'}(X)$ we have that 
\begin{equation}\label{rationalequi1}
    \bigg| \frac{1}{|[N \alpha, N \beta]_{\mathbb{Z}}|} \sum_{k \in [N \alpha, N \beta]_{\mathbb{Z}}} f(n(k/N)a(1/N)) - \int f \; d \nu \bigg| \ll \frac{C'(\xi,L)||f||_{C^{d'}_b(X)}}{(\beta - \alpha)N^{\delta'}}
\end{equation}
where $C'(\xi,L) = C(\xi)^{1/2}L^{1/2} + A(\xi) + 1$ with $A(\xi)$ is defined as in Lemma \ref{distancedifference}, $L = \max(|\alpha|, |\beta|,1)$ and $d' = \max(d,4)$

\begin{proof}
In the following argument, we will show the result where $A(\xi)$ is replaced by $A'(\xi)$, where $A'$ is defined as $A$ is in Lemma \ref{distancedifference}, but with respect to the interval $[\alpha, \beta + (\beta - \alpha)]$ or, by symmetry under negating the values of $m$ and $M$ in the following proof, $[\alpha - (\beta - \alpha), \beta]$. The result will then follow with $A$ in place of $A'$ since we first can split the interval $[ \alpha,  \beta]$ into $[ \alpha, (\alpha + \beta)/2] $ and $[(\alpha + \beta)/2, \beta] $ and apply the result with $A'$ in the error term to these respective sections.

Let $\kappa \in (0,1/4)$, $M \in [1,N^{1/4}]_{\mathbb{Z}}$ be a constants to be later specified and define $I := (-1/2N^{\kappa}, 1/2N^{\kappa})$. Throughout the proof, we will make the assumption that $N$ is sufficiently large so that $N^{-\kappa}A'(\xi) \leq c$. This will allow us to apply Lemma \ref{distancedifference} in the following arguments. Note that, in the case when $N^{-\kappa}A'(\xi) \leq c$, provided that $\delta' \leq \kappa$, we have that (\ref{rationalequi1}) (with $A'$ in place of $A$) holds trivially for an implicit constant independent of any other parameters, so this restriction is made without loss of generality. Similarly, we will assume that $N^{3/4} (\beta - \alpha) \geq 1$ so that $|[N \alpha, N \beta]_{\mathbb{Z}}| \asymp N(\beta - \alpha)$ and so, again, we can apply Lemma \ref{distancedifference} in the following argument.

Now, for any $k,N \in \mathbb{N}$ and $t \in k/N + (1/N)I$, (\ref{distancedifference2}) in Lemma \ref{distancedifference} and (\ref{DistanceBound}) tell us that $|f(n(k/N)a(1/N)) - f(n(t)a(1/N))| \ll A'(\xi) ||f||_{C^d_b(X)} N^{-\kappa}$. 
We therefore have that 
\begin{align}
    & \frac{1}{|[N \alpha, N \beta]_{\mathbb{Z}}|} \sum_{k \in [N \alpha, N \beta]_{\mathbb{Z}}} f(n(k/N)a(1/N)) \nonumber \\
    = & \frac{N^{1 + \kappa}}{|[N \alpha, N \beta]_{\mathbb{Z}}|} \sum_{k \in [N \alpha, N \beta]_{\mathbb{Z}}} \int_{k/N + (1/N)I} f(n(t)a(1/N)) \; dt + O( A'(\xi)||f||_{C^1_b(X)} N^{-\kappa}). \label{error1}
\end{align} 
Now, we have that
\begin{align}
    & \frac{N^{1 + \kappa}}{|[N \alpha, N \beta]_{\mathbb{Z}}|} \sum_{k \in [N \alpha, N \beta]_{\mathbb{Z}}} \int_{k/N + (1/N)I} f(n(t)a(1/N)) \; dt - \int_X f \; d \nu  \nonumber \\
    = & \frac{N^{1 + \kappa}}{|[N \alpha, N \beta]_{\mathbb{Z}}|} \sum_{k \in [N \alpha, N \beta]_{\mathbb{Z}}} \int_{k/N + (1/N)I} \frac{1}{M} \sum_{m=0}^{M-1} f(n(t + m/N)a(1/N)) \; dt - \int_X f \; d \nu \nonumber \\
    + & O\bigg( \frac{M||f||_{\infty}}{|[N \alpha, N \beta]_{\mathbb{Z}}|} \bigg), \label{error2}
\end{align}
since 
$$
 \sum_{k \in [N \alpha, N \beta]_{\mathbb{Z}}} N^{1 + \kappa} \int_{k/N +  (1/N)I} f(n(t)a(1/N)) \; dt 
$$
and
$$
 \sum_{k \in [N \alpha, N \beta]_{\mathbb{Z}}} N^{1 + \kappa} \int_{k/N + (1/N)I} f(n(t + m/N)a(1/N)) \; dt 
$$
differ by $m$ terms of size  at most $ ||f||_{\infty}N^{-1-\kappa}$ and we are summing $M$ such terms. By (\ref{distancedifference1}) in Lemma \ref{distancedifference} and (\ref{DistanceBound}), we find that $$|f(n(t + m/N)a(1/N)) - f(n(t)a(1/N)u(m))| \ll A'(\xi)||f||_{C_b^1(X)} MN^{-1/2}.$$ Here the fact that $N^{3/4}|\beta - \alpha| \geq 1$ and $m \leq M \leq N^{1/4}$ are used to allow us to apply Lemma \ref{distancedifference}. So
\begin{align}
    & \frac{N^{1 + \kappa}}{|[N \alpha, N \beta]_{\mathbb{Z}}|} \sum_{k \in [N \alpha, N \beta]_{\mathbb{Z}}} \int_{k/N + (1/N)I} f(n(t)a(1/N)) \; dt - \int_X f \; d \nu \nonumber \\
    = & \frac{N^{1 + \kappa}}{|[N \alpha, N \beta]_{\mathbb{Z}}|} \int_J [D_Mf](n(t)a(1/N)) \; dt \nonumber \\
    + & O\bigg((A'(\xi)||f||_{C_b^1(X)} \bigg(\frac{M}{N^{1/2}} + \frac{M}{|[N \alpha, N \beta]_{\mathbb{Z}}|} \bigg) \label{error3} \bigg)
\end{align}
where $J := \cup_{k \in [N \alpha, N \beta]_{\mathbb{Z}}}(k/N + (1/N) I)$.

Now, using the Cauchy-Schwartz inequality we have that 
$$
\bigg| \int_J [D_Mf](n(t)a(1/N)) \; dt  \bigg| \leq \text{Vol}(J)^{1/2} \bigg( \int_{\alpha - 1/2N^{\kappa}}^{\beta + 1/2N^{\kappa}} |D_Mf|^2(n(t)a(1/N)) \; dt \bigg)^{1/2}. 
$$
The key point now is that we can apply (\ref{initialassumption}). Indeed, we have that
\begin{align*}
    & \int_{\alpha - 1/2N^{\kappa}}^{\beta + 1/2N^{\kappa}} |D_Mf|^2(n(t)a(1/N)) \; dt = (\beta - \alpha + N^{-1-\kappa})\int_X |D_Mf|^2 \; d \nu \\
    + & O(C(\xi)L ||[D_Mf]^2||_{C^d_b(X)}N^{- \delta}),
\end{align*}
where $L = \max(|\alpha|, |\beta|,1)$. We now seek to bound the two terms above using mixing and the properties of Sobolev norms. Starting with the integral, one finds by expanding that 
\begin{align}
    \int_X |D_Mf|^2\; d \nu = \frac{1}{M^2} \sum_{m_1=0}^{M-1}\sum_{m_2=0}^{M-1} \langle u(m_1) \cdot f_0 ,u(m_2) \cdot f_0 \rangle \label{doublesum}
\end{align}
where $f_0 := f - \int_X f \; d \nu$ and $\langle.,.\rangle$ is the inner product on $L^2(X,\nu)$. By Proposition \ref{MixingRate} and the right-invariance of $\nu$, one sees that
\begin{align*}
    \langle u(m_1) \cdot f_0 ,u(m_2) \cdot f_0 \rangle & \ll_{\epsilon} [|m_1-m_2|+1]^{-1+\epsilon} || f_0 ||_{C^4_b(X)}^2  \\
    & \ll_{\epsilon} [|m_1-m_2|+1]^{-1+\epsilon} || f ||_{C^4_b(X)}^2.
\end{align*}

Therefore, since the number of terms in the sum (\ref{doublesum}) where $|m_1-m_2| = k$ is at most $k$, if $k \in \{0,1,2,\dotsc M-1\}$, and is 0 otherwise, we have that
\begin{equation}\label{error4}
    \int_X |D_Mf|^2\; d \nu \ll_{\epsilon} \frac{1}{M} \sum_{k=0}^{M-1} (1+k)^{-1+\epsilon} || f ||_{C^4_b(X)}^2 \ll_{\epsilon} M^{-1+\epsilon} || f ||_{C^4_b(X)}^2.
\end{equation}

For the estimate of the Sobolev norm, we use (\ref{AdjointBound}) to see that
\begin{align}
    |||D_Mf|^2||_{C^d_b(X)} & \leq  \frac{1}{M^2} \sum_{m_1=0}^{M-1}\sum_{m_2=0}^{M-1} || (u(m_1) \cdot f_0)(u(m_2) \cdot f_0) ||_{C^d_b(X)} \nonumber \\
    & \ll \frac{1}{M^2}\bigg( \sum_{m_1=0}^{M-1} || u(m_1)\cdot f_0 ||_{C^d_b(X)} \bigg) \bigg( \sum_{m_2=0}^{M-1} || u(m_2)\cdot f_0 ||_{C^d_b(X)} \bigg) \nonumber \\
    & \ll \frac{1}{M^2}\bigg( \sum_{m_1=0}^{M-1} (1+m_1)^{2d} ||f_0 ||_{C^d_b(X)} \bigg) \bigg( \sum_{m_2=0}^{M-1} (1+m_2)^{2d} || f_0 ||_{C^d_b(X)} \bigg) \nonumber \\
    & \ll M^{2d} ||f||_{C^d_b(X)}^2 \label{error5}
\end{align}
Putting (\ref{error1}), (\ref{error2}), (\ref{error3}), (\ref{error4}) and (\ref{error5}) together, and using that Vol$(J) \asymp (\beta - \alpha)N^{- \kappa}$ we get that, for $d' = \max(4,d)$
\begin{align*}
    & \bigg| \frac{1}{|[N \alpha, N \beta]_{\mathbb{Z}}} \sum_{k \in [N \alpha, N \beta]_{\mathbb{Z}}} - \int_X f \; d \nu \bigg| \\
    \ll_{\epsilon} &  \bigg( A'(\xi)N^{-\kappa} + \frac{M}{N(\beta - \alpha)} + A'(\xi) \bigg(\frac{M}{N^{1/2}} + \frac{M}{N(\beta - \alpha)} 
    \bigg)  \\ +
    & N^{\kappa/2}(\beta - \alpha)^{-1/2} ((\beta - \alpha + N^{-1-\kappa})M^{-1 + \epsilon} + C(\xi)LM^{2d} N^{-\delta} )^{1/2} 
    \bigg) ||f||_{C^{d'}_b}.
\end{align*}
To firstly tidy this error term up, note that $|\alpha|, |\beta| \leq L$, $M \leq N^{1/4}$ and $\kappa \leq 1/4$ and so we can write the error term as 
$$
O_{\epsilon}\bigg((N^{-\kappa} + M^{d} N^{\kappa/2 - \delta/2} + N^{\kappa/2}M^{-1/2 + \epsilon/2} )\frac{(A'(\xi) + C(\xi)^{1/2}L^{1/2} + 1)||f||_{C^{d'}_b(X)} }{\beta - \alpha}\bigg).
$$
One can then see that choosing $M = N^{\delta/(2d'+1)}$ and then $\kappa = \delta/3(2d+1)$ balances these error terms and shows that for any $\delta' < \kappa$
\begin{align*}
        &\bigg| \frac{1}{|[N \alpha, N \beta]_{\mathbb{Z}}|} \sum_{k \in [N \alpha, N \beta]_{\mathbb{Z}}} f(n(k/N)a(1/N)) - \int f \; d \nu \bigg| \\ \ll & \frac{(C(\xi)^{1/2}L^{1/2} + A'(\xi) + 1)||f||_{C^d_b(X)}}{(\beta - \alpha)N^{\delta'}}.
\end{align*}
This completes the proof due to our earlier observations about replacing $A'(\xi)$ with $A(\xi)$. 

\end{proof}
\end{proposition}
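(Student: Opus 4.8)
\emph{Proof proposal.} The plan is to thicken the discrete average into an average of horocycle integrals to which the input hypothesis (\ref{initialassumption}) applies, after first smoothing along the flow direction to expose the discrepancy $D_Mf$. Concretely, fix small parameters $\kappa\in(0,1/4)$ and $M\in[1,N^{1/4}]_{\mathbb{Z}}$ to be optimised at the end, and replace the point mass at each $k/N$ by the uniform measure on $k/N+(1/N)I$, where $I=(-\tfrac12N^{-\kappa},\tfrac12N^{-\kappa})$; by the continuity estimate (\ref{distancedifference2}) of Lemma \ref{distancedifference} together with the distance bound (\ref{DistanceBound}) this costs only $O(A(\xi)||f||_{C^1_b}N^{-\kappa})$. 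Next insert a flow average, replacing $f(n(t)a(1/N))$ by $\tfrac1M\sum_{m=0}^{M-1}f(n(t+m/N)a(1/N))$: the boundary terms introduced are $O(M||f||_\infty/|[N\alpha,N\beta]_{\mathbb{Z}}|)$, and the conjugation estimate (\ref{distancedifference1}) then lets one swap $n(t+m/N)a(1/N)$ for $n(t)a(1/N)u(m)$ up to $O(A(\xi)||f||_{C^1_b}MN^{-1/2})$. Since the constant part of the flow average produces $\int_X f\,d\nu$ exactly, what remains is to control $\tfrac{N^{1+\kappa}}{|[N\alpha,N\beta]_{\mathbb{Z}}|}\int_J[D_Mf](n(t)a(1/N))\,dt$, where $J$ is the union of the thickened intervals and $\mathrm{Vol}(J)\asymp(\beta-\alpha)N^{-\kappa}$. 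To apply Lemma \ref{distancedifference} at these scales I would freely assume $N$ large and $N^{3/4}(\beta-\alpha)\ge1$, which is harmless since otherwise the claimed bound holds trivially.

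For the remaining integral I would use Cauchy--Schwarz to pass to $\mathrm{Vol}(J)^{1/2}\big(\int_{\alpha-N^{-\kappa}/2}^{\beta+N^{-\kappa}/2}|D_Mf|^2(n(t)a(1/N))\,dt\big)^{1/2}$ and apply the hypothesis (\ref{initialassumption}) to the test function $|D_Mf|^2$, which lies in $C^d_b(X)$ since $D_Mf$ does (by the adjoint bound (\ref{AdjointBound}) applied to each $u(m)\cdot f$) and the product rule (\ref{ProductRule}) is stable under squaring. This splits the integral into a main term $\asymp(\beta-\alpha)\int_X|D_Mf|^2\,d\nu$ and an error $O(C(\xi)L\,||\,|D_Mf|^2\,||_{C^d_b}N^{-\delta})$. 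I would then estimate the two factors separately: expanding $\int_X|D_Mf|^2\,d\nu=\tfrac1{M^2}\sum_{m_1,m_2}\langle u(m_1)\cdot f_0,u(m_2)\cdot f_0\rangle$ with $f_0=f-\int_Xf\,d\nu$, bounding each inner product by the polynomial mixing rate of Proposition \ref{MixingRate}, and using that at most $k$ pairs satisfy $|m_1-m_2|=k$, gives $\int_X|D_Mf|^2\,d\nu\ll_\epsilon M^{-1+\epsilon}||f||_{C^4_b}^2$; while (\ref{AdjointBound}) again, via $||u(m)\cdot f_0||_{C^d_b}\ll(1+m)^{2d}||f_0||_{C^d_b}$, together with the product rule gives $||\,|D_Mf|^2\,||_{C^d_b}\ll M^{2d}||f||_{C^d_b}^2$.

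Collecting all the error terms and absorbing $|\alpha|,|\beta|\le L$, $M\le N^{1/4}$, $\kappa\le1/4$, one is left with a bound of the shape
$$\big(N^{-\kappa}+M^{d}N^{\kappa/2-\delta/2}+N^{\kappa/2}M^{-1/2+\epsilon/2}\big)\,\frac{(A(\xi)+C(\xi)^{1/2}L^{1/2}+1)\,||f||_{C^{d'}_b}}{\beta-\alpha},\qquad d'=\max(4,d).$$
Taking $M$ a small power of $N$ (about $N^{\delta/(2d'+1)}$) and then $\kappa$ a small multiple of $\delta/(2d+1)$ balances the three bracketed terms and proves the proposition for any $\delta'$ below the resulting exponent. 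The main obstacle I anticipate is precisely this optimisation: Cauchy--Schwarz forces one to work with $|D_Mf|^2$, whose $L^2$-mass decays only like $M^{-1+\epsilon}$ whereas its Sobolev norm --- charged by the hypothesis at rate $N^{-\delta}$ --- grows like $M^{2d}$, and the $\mathrm{Vol}(J)^{1/2}$ gained from Cauchy--Schwarz only partly offsets the $\mathrm{Vol}(J)^{-1}$ normalisation, so the admissible range of $M$ (hence the eventual $\delta'$) is narrow. One further technical wrinkle is that shifting $t\mapsto t+m/N$ can move points outside $[\alpha,\beta]$, so $A(\xi)$ must really be formed over a slightly enlarged interval; I would deal with this by first halving $[\alpha,\beta]$ and running the enlarged-interval estimate on each half.
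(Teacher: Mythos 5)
Your proposal reproduces the paper's argument essentially step for step: the same thickening into intervals of length $N^{-1-\kappa}$, the same insertion of a flow average of length $M$ followed by the swap $n(t+m/N)a(1/N)\leftrightarrow n(t)a(1/N)u(m)$ via Lemma~\ref{distancedifference}, the same Cauchy--Schwarz reduction to $\int|D_Mf|^2$ followed by an application of (\ref{initialassumption}) to $|D_Mf|^2$, the same mixing bound $\ll_\epsilon M^{-1+\epsilon}$ and Sobolev bound $\ll M^{2d}$, and the same choice $M\approx N^{\delta/(2d'+1)}$, $\kappa\approx\delta/3(2d+1)$. You even identify the same technical wrinkle about $A(\xi)$ needing to be formed over an enlarged interval and propose the same halving fix the paper uses, so this is the paper's proof.
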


We are now in the position to prove (\ref{non-primitive case}) in Theorem \ref{main} via a standard approximation argument. 

\begin{corollary}\label{to compact support}
Suppose that $n(t) = (I,\xi(t))u(t)$ satisfies the conditions of Theorem \ref{main} for some $\delta > 0$, $d \in \mathbb{N}$ and $C = C(\xi)$. Then there exists $\delta_1 > 0$
such that for $d' := \max(d,4)$ any $f \in C_b^{d'}(X)$ and any $C^1$ compactly supported $\psi:\mathbb{R} \to \mathbb{R}$ we have that
\begin{align}
        & \frac{1}{N} \sum_{k \in \mathbb{Z}} f(n(k/N)a(1/N)) \psi(k/N) = \int_X f \; d \nu \int_{\mathbb{R}} \psi(t) dt  \nonumber \\
        + & O((W+1) C'(\xi,L) ||f||_{C^{d'}_b(X)} ||\psi||_{W^{1,\infty}} N^{- \delta_1}) \label{nodiscbound},
\end{align}
where $L$ is the smallest real number $\geq 1$ such that $[-L,L]$ contains \textup{supp}$(\psi)$, $W$ is the width of \textup{supp}$(\psi)$ and, in the constant $C'(\xi,L)$, $A(\xi)$ and $C(\xi)$ are defined with respect to the smallest interval $[\alpha, \beta]$ containing the support of $\psi$. 

\begin{proof}
Let $\kappa > 0$ be a parameter to be later specified.

Denoting $f_0 = f - \int_X f \; d \nu$, we can write 
\begin{align*}
    & \frac{1}{N} \sum_{k \in \mathbb{Z}} f(n(k/N)a(1/N)) \psi(k/N) \\
    = & \frac{1}{N} \sum_{k \in \mathbb{Z}} f_0(n(k/N)a(1/N)) \psi(k/N) + \frac{1}{N} \int_X f \; d \nu \sum_{k \in \mathbb{Z}} \psi(k/N).
\end{align*}
Via approximating $\psi(k/N)$ by integrals of length $1/N$ centred at $k/N$, one can easily see that 
\begin{equation}\label{RiemannSum}
    \frac{1}{N} \sum_{k \in \mathbb{Z}}\psi(k/N) = \int_{\mathbb{R}} \psi(t) \; dt + O\bigg( \frac{(W+1)||\psi'||_{\infty}}{N} \bigg).
\end{equation}
For $r \in \mathbb{Z}$ we let $ I_r^{\kappa} := [r/N^{\kappa} - 1/2N^{\kappa} , r/N^{\kappa} + 1/2N^{\kappa})$
\begin{align}
    & \frac{1}{N} \sum_{k \in \mathbb{Z}} f_0(n(k/N)a(1/N)) \psi(k/N) = \frac{1}{N}\sum_{r \in \mathbb{Z}} \sum_{ \substack{k \in \mathbb{Z} : \\ k/N \in I_r^{\kappa}  }} f_0(n(k/N)a(1/N)) \psi(k/N)  \nonumber\\
    = &\frac{1}{N}\sum_{r \in \mathbb{Z}} \psi(r/N^{\kappa}) \sum_{ \substack{k \in \mathbb{Z} : \\ k/N \in I_r^{\kappa}  }} f_0(n(k/N)a(1/N)) \label{term1} \\
    + & \frac{1}{N}\sum_{r \in \mathbb{Z}} \sum_{ \substack{k \in \mathbb{Z} : \\ k/N \in I_r^{\kappa}  }} f_0(n(k/N)a(1/N)) [ \psi(k/N) - \psi(r/N^{\kappa})] \label{term2}
\end{align}

Now, applying Proposition \ref{non-primitive interval}, we find that, 
\begin{equation}
    \frac{1}{N^{1- \kappa}} \sum_{ \substack{k \in \mathbb{Z} : \\ k/N \in I_r^{\kappa}  }} f_0(n(k/N)a(1/N)) \ll C'(\xi,L) ||f||_{C^{d'}_b(X)} N^{\kappa - \delta'}
\end{equation}
for some $\delta' > 0$. Given that there are at most $N^{\kappa}(W+1)$ terms $r$ in (\ref{term1}) for which $\psi(r/N^{\kappa})$ is non-zero, we have a bound on (\ref{term1}) of $$(W+1)C'(\xi,L) ||f||_{C^{d'}_b(X)} ||\psi||_{\infty} N^{\kappa - \delta'}.$$

Turning to (\ref{term2}), we see that for all $k,r$ in the sum,
$|\psi(k/N) - \psi(k/N^{\kappa})| \leq ||\psi'||_{\infty} N^{-\kappa}$. Thus, given that the number of non-zero terms in the double sum is at most $(W+1)N$, we find that (\ref{term2}) us bounded up to a constant by $ (W+1) ||\psi'||_{\infty} N^{-\kappa} ||f||_{\infty}$. Choosing $\kappa = \delta'/2$ and combining (\ref{RiemannSum}) with the bounds on (\ref{term1}) and (\ref{term2}) then gives the result. 
\end{proof}
\end{corollary}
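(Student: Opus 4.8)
The plan is to prove the Corollary by a multi-scale decomposition: introduce an intermediate length scale $N^{-\kappa}$ interpolating between the macroscopic scale $1$ on which $\psi$ varies and the spacing $1/N$ of the points $n(k/N)a(1/N)$, and feed each resulting block of the sum into Proposition \ref{non-primitive interval}.

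First I would peel off the mean. Writing $f_0 := f - \int_X f\,d\nu$, the sum splits as $\frac1N\sum_k f_0(n(k/N)a(1/N))\psi(k/N)$ plus $\frac1N \int_X f\,d\nu \sum_k \psi(k/N)$. The second piece is just a Riemann sum of $\psi$ with mesh $1/N$; since $\psi$ is $C^1$ and supported in an interval of width $W$, it equals $\int_{\mathbb{R}}\psi\,dt + O\bigl((W+1)\|\psi'\|_\infty/N\bigr)$, an error already of the desired shape. So it remains to bound the $f_0$-sum.

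Next I would fix $\kappa\in(0,1)$ to be chosen and partition $\mathbb{R}$ into the intervals $I_r^\kappa := [rN^{-\kappa}-\tfrac12 N^{-\kappa},\, rN^{-\kappa}+\tfrac12 N^{-\kappa})$, $r\in\mathbb{Z}$. On each $I_r^\kappa$ I replace $\psi(k/N)$ by the constant $\psi(rN^{-\kappa})$; since $|\psi(k/N)-\psi(rN^{-\kappa})|\le\|\psi'\|_\infty N^{-\kappa}$ and there are $O\bigl((W+1)N\bigr)$ indices $k$ with $k/N\in\textup{supp}(\psi)$, this substitution costs $O\bigl((W+1)\|\psi'\|_\infty\|f\|_\infty N^{-\kappa}\bigr)$. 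The remaining main term is $\frac1N\sum_r \psi(rN^{-\kappa})\sum_{k/N\in I_r^\kappa} f_0(n(k/N)a(1/N))$. To each inner sum I apply Proposition \ref{non-primitive interval} on the interval $I_r^\kappa$, which has length $N^{-\kappa}\le 1$; for the (at most $O((W+1)N^\kappa)$) values of $r$ with $\psi(rN^{-\kappa})\ne 0$ this interval lies inside $\textup{supp}(\psi)\subset[-L,L]$, so the $L$-parameter is $\le L$ and the quantities $A(\xi)$, $C(\xi)$ for $I_r^\kappa$ are dominated by those for the fixed interval $[\alpha,\beta]\supset\textup{supp}(\psi)$. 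The Proposition then gives $\bigl|\sum_{k/N\in I_r^\kappa} f_0(\cdots)\bigr|\ll N^{1-\kappa}\cdot C'(\xi,L)\|f\|_{C^{d'}_b(X)}N^{\kappa-\delta'}=N^{1-\delta'}C'(\xi,L)\|f\|_{C^{d'}_b(X)}$, so summing the $O((W+1)N^\kappa)$ nonzero terms and using $|\psi(rN^{-\kappa})|\le\|\psi\|_\infty$ bounds the main term by $O\bigl((W+1)\|\psi\|_\infty C'(\xi,L)\|f\|_{C^{d'}_b(X)}N^{\kappa-\delta'}\bigr)$.

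Finally I would balance the two error exponents $N^{-\kappa}$ and $N^{\kappa-\delta'}$ by taking $\kappa=\delta'/2$, which yields overall decay $N^{-\delta'/2}$ and hence the claim with $\delta_1=\delta'/2$; both contributions are controlled by $\|\psi\|_{W^{1,\infty}}=\|\psi\|_\infty+\|\psi'\|_\infty$ and therefore combine into the stated bound. The main thing to be careful about — really the only subtlety — is the bookkeeping just described: every application of Proposition \ref{non-primitive interval} must use the $L$, $A(\xi)$, $C(\xi)$ attached to the fixed interval containing $\textup{supp}(\psi)$, not to the microscopic pieces $I_r^\kappa$, so that $C'(\xi,L)$ stays uniform in $r$; one also checks the hypotheses of the Proposition hold on each piece for $N$ large, i.e. $N^{3/4}\cdot N^{-\kappa}\ge 1$, which is automatic since $\kappa<3/4$.
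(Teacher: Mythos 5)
Your proposal is correct and follows essentially the same route as the paper's proof: peel off the mean, treat the $\int f\,d\nu$ part as a Riemann sum, partition into the intervals $I_r^{\kappa}$ of length $N^{-\kappa}$, freeze $\psi$ on each piece, apply Proposition \ref{non-primitive interval} blockwise with constants taken on the fixed interval containing $\mathrm{supp}(\psi)$, and balance with $\kappa=\delta'/2$. The error bookkeeping (the $(W+1)N^{\kappa}$ count of nonzero blocks, the $\|\psi'\|_{\infty}N^{-\kappa}$ freezing cost, and the $N^{\kappa-\delta'}$ main-term bound) matches the paper's argument step for step.
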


\begin{remark}\label{Piecewise continuous}
\textup{
Corollary \ref{to compact support} also applies to functions $\psi$ which are piecewise $C^1$ and compactly supported. If such a function has $m$ discontinuities then the bound we obtain in (\ref{nodiscbound}) is 
$(W+1) C'(\xi,L) ||f||_{C^{d'}_b(X)} ||\psi||_{W^{1,\infty}} N^{- \delta_1} + m ||f||_{\infty}||\psi||_{\infty} N^{-1}.
$ This arises due to the fact that there are at most $m$ intervals $I_r^{\kappa}$ containing the discontinuities, on which we apply a crude absolute value bound. 
}
\end{remark}

\section{The primitive case}

Turning now to the primitive case, our approach is going revolve around using Fourier analysis. Indeed, for a compactly supported function $\psi$, we will view the weighted horocycle sections averages $f(n(t)a(y))\psi(t)$ as being embedded into a circle $\mathbb{R}/\mathbb{Z}$. This will allow us to decompose the weighted averaged across primitive rational points into averages across non-primitive rational points which have been twisted by characters $e(ct) : = \exp(2 \pi i ct)$. The key point, which is formalized by the following lemma, is that these twisted averages will decay uniformly in the choice of character when the function $f$ has zero mean. This lemma
inspired by \cite{ubis2017effective}[Lemma 2.3] which uses a similar Van-der-corput style differencing to that used in \cite{venkatesh2010sparse}.  

\begin{lemma}\label{c independence}
Suppose that $n(t) = (I,\xi(t))u(t)$ satisfies the conditions of Theorem \ref{main} for some $\delta > 0$, $d \in \mathbb{N}$ and $C = C(\xi)$. Then there exists $\delta_2 > 0$
such that for $d' := \max(d,4)$ any $f \in C_b^{d'}(X)$ of mean zero, any continuously differentiable compactly supported $\psi:\mathbb{R} \to \mathbb{R}$ and any $c \in \mathbb{R}$ we have that 
\begin{equation}\label{twistedsum}
\frac{1}{N} \sum_{k \in \mathbb{Z}} f(n(k/N)a(1/N)) \psi(k/N)e(ck/N) \ll \frac{(W+1) C'(\xi,L)^{1/2} ||f||_{C^{d'}_b(X)} ||\psi||_{W^{1,\infty}}}{N^{\delta_2}}
\end{equation}
where $L,W$ and $C'(\xi,L)$ are defined as in Corollary \ref{to compact support}.

\begin{proof}
As in the proof of Proposition \ref{non-primitive interval}, our argument involves introducing shifts. So, in the following argument, the result is shown where $A(\xi)$, $C(\xi)$ and $C'(\xi,L)$ are replaced by $A_N'(\xi)$, $C_N(\xi)$ and so $C'_N(\xi,L)$, which are defined with respect to the interval $[\alpha- N^{-1/2},\beta + N^{1/2}]$ as opposed to the smallest interval $[\alpha,\beta]$ containing supp$(\psi)$. The stated result follows by the same argument after first replacing $\psi$ by $\psi$ multiplied by the indicator function of the interval $[\alpha-N^{-1/2},\beta + N^{-1/2}]$. This will introduce an error $O(||f||_{\infty} ||\psi||_{\infty}N^{-1/2})$ resulting from the terms in the sum of (\ref{twistedsum}) where $k/N \in [\alpha-N^{-1/2},\alpha) \cup (\beta,\beta+N^{1-/2}]$.
Note that we can still apply Corollary \ref{non-primitive interval} to such functions, as noted in Remark \ref{Piecewise continuous}. 

Let $F_N(t) := f(n(t)a(1/N))\psi(t)$. 
Define 
$$
I:= \frac{1}{N}\sum_{k \in \mathbb{Z}} F_N(k/N) e(cK/N).
$$
Let $T \in [1,N^{1/2}]_{\mathbb{Z}}$ be a parameter to be later specified. By introducing shifts in the sum by an amount $s \in \{0,1,\dotsc,T-1\}$
we see that
\begin{align*}
    I & = \frac{1}{T} \sum_{s = 0}^{T-1} \bigg( \frac{1}{N} \sum_{k \in \mathbb{Z}} F_N((k+s)/N) e(c(k+s)) \bigg) + O\bigg( \frac{T||f||_{\infty}||\psi||_{\infty}}{N} \bigg) \\
    & = \frac{1}{N}\sum_{k \in \mathbb{Z}} \bigg( \frac{1}{T} \sum_{s = 0}^{T-1} F_N((k+s)/N) e(c(k+s)) \bigg) + O\bigg( \frac{T||f||_{\infty}||\psi||_{\infty}}{N} \bigg).
\end{align*}
Therefore, 
\begin{equation}\label{ItoI'}
    |I| \leq \frac{1}{N}\sum_{k \in \mathbb{Z}} \bigg| \frac{1}{T} \sum_{s = 0}^{T-1} F_N((k+s)/N) e(c(k+s)) \bigg| + O\bigg( \frac{T||f||_{\infty}||\psi||_{\infty}}{N} \bigg).
\end{equation}
Let 
$$
I' := \frac{1}{N}\sum_{k \in \mathbb{Z}} \bigg| \frac{1}{T} \sum_{s = 0}^{T-1} F_N((k+s)/N) e(c(k+s)) \bigg|.
$$
Using the Cauchy-Schwartz inequality, expanding and rearranging we have that 
\begin{align*}
    |I'|^2 & \leq \frac{W+1}{N} \sum_{k \in \mathbb{Z}} \bigg| \frac{1}{T} \sum_{s = 0}^{T-1} F_N((k+s)/N)e(c(k+s)/N) \bigg|^2 \\
    & \leq \frac{W+1}{T^2} \sum_{s_1=0}^{T-1}\sum_{s_1=0}^{T-1} 
    \frac{e(c(s_1-s_2)/N)}{N} \sum_{k \in \mathbb{Z}} F_N((k+s_1)/N) F_N((k+s_2)/N) \\
    & \leq \frac{W+1}{T^2}  \sum_{s_1=0}^{T-1}\sum_{s_1=0}^{T-1} \bigg| \frac{1}{N} \sum_{k \in \mathbb{Z}} F_N((k+s_1)/N) F_N((k+s_2)/N) \bigg| \\
    & \leq \frac{W+1}{T^2}  \sum_{s_1=0}^{T-1}\sum_{s_1=0}^{T-1} \bigg| \frac{1}{N} \sum_{k \in \mathbb{Z}} F_N(k/N) F_N((k+s_2-s_1)/N) \bigg| \\
    & \leq \frac{W+1}{T} \sum_{s \in [-T,T]_{\mathbb{Z}} }\bigg| \frac{1}{N} \sum_{k \in \mathbb{Z}}  F_N(k/N) F_N((k+s)/N) \bigg|
\end{align*}
where in the last sum we have let $s = s_2-s_1$.
Now, note for $s$ in the above sum, we have that $(k+s)/N  \in [\alpha-N^{-1/2},\beta + N^{-1/2}]$, and so by Lemma \ref{distancedifference} and (\ref{DistanceBound})
\begin{align*}
    & f(n((k+s)/N)a(1/N)) = f(n(k/N)a(1/N)u(s)) \\
+ & O(TN^{-1/2}A_N'(\xi)||f||_{C^1_b(X)}). 
\end{align*}
Therefore, using the notation $\psi_a(t) := \psi(t+a)$ we have that $|I'|^2$ is bounded by
\begin{align*}
    \frac{W+1}{T}  \sum_{s \in [-T,T]_{\mathbb{Z}}} \bigg| \frac{1}{N} \sum_{k \in \mathbb{Z}}
& ([u(s) \cdot f][f])(n(k/N)a(1/N))
\psi_{s/N}(k/N) \psi(k/N) \bigg| \\
+ O(TN^{-1/2}A'(\xi)||f||_{C^1_b(X)}).
\end{align*}

By Corollary $\ref{to compact support}$ we see that 
\begin{align}
   & \frac{1}{N} \sum_{k \in \mathbb{Z}}
([u(s) \cdot f][f])(n(k/N)a(1/N)) 
\psi_{s/N}(k/N) \psi(k/N) \nonumber \\
=& \int_X ([u(s) \cdot f]f) \; d \nu \int_{\mathbb{R}} \psi_{s/N}(t) \psi(t) \; dt \label{Interror} \\
+ & O((W+1) C_N'(\xi,L) ||[u(s)\cdot f][f] ||_{C^{d'}_b(X)} ||\psi_{s/N} \cdot \psi||_{W^{1,\infty}} N^{- \delta_1}) \label{RemainingError}.
\end{align}

As in the proof of Proposition \ref{non-primitive interval} we have that
$$
\int_X ([u(s) \cdot f][f]) \; d \nu \ll_{\epsilon} (1+|s|)^{-1+\epsilon}||f||_{C^4_b(X)}
$$
and so we have an overall bound on (\ref{Interror}) of $$ O_{\epsilon}((W+1)(1+|s|)^{-1+\epsilon}||f||^2_{C^4_b(X)} ||\psi||^2_{\infty}) .$$
Similarly, for $s \in [-T,T]$ we have that $||[u(s)\cdot f][f] ||_{C^{d'}_b(X)} \ll (1+|s|)^{2d'} ||f||^2_{C^{d'}_b(X)}$. Applying the product rule then gives us a bound of 
$$O((W+1) C_N'(\xi,L) (1+|s|)^{2d'} ||f||^2_{C^{d'}_b(X)} ||\psi||^2_{W^{1,\infty}}) N^{-\delta_1}  $$
on (\ref{RemainingError}).
This gives us the overall bound 
$$
|I'|^2 \ll_{\epsilon} (W+1)^2C_N'(\xi,L)||f||_{C^{d'}_b(X)}^2 ||\psi||_{W^{1,\infty}}^2 (T^{-1+\epsilon} + TN^{-1/2} + T^{2d'} N^{-\delta_1}).
$$
Choosing $T = N^{\delta_1/1+2d'}$ and combining plugging the resulting bound on $I'$ into (\ref{ItoI'}) then gives the result for any $\delta_2 < \delta_1/(2+4d')$.

\end{proof}

\end{lemma}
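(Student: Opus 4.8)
The plan is to run a van der Corput / Weyl differencing argument to reduce the twisted sum over rational points to an untwisted sum of the pairing $[u(s)\cdot f][f]$ over rational points, which we can already control by Corollary \ref{to compact support}, and then exploit the polynomial mixing of Proposition \ref{MixingRate} to win back a power of the shift length $T$. First I would reduce to the case where $\psi$ is replaced by $\psi$ restricted (as a piecewise $C^1$ function) to a slightly enlarged interval $[\alpha-N^{-1/2},\beta+N^{-1/2}]$ containing the support; this costs an error of size $O(\|f\|_\infty\|\psi\|_\infty N^{-1/2})$ and lets me apply Lemma \ref{distancedifference} freely when introducing shifts of size at most $T\le N^{1/2}$. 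The shift-and-average step: averaging the sum over shifts $s\in\{0,\dots,T-1\}$ of the index $k\mapsto k+s$ changes the sum by only $O(T\|f\|_\infty\|\psi\|_\infty/N)$ boundary terms, so by the triangle inequality it suffices to bound $I' := \frac1N\sum_k\big|\frac1T\sum_{s=0}^{T-1}F_N((k+s)/N)e(c(k+s))\big|$ where $F_N(t)=f(n(t)a(1/N))\psi(t)$.

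Next I would apply Cauchy--Schwarz in the $k$-variable (using that at most $O((W+1)N)$ values of $k$ contribute), expand the square, and swap the order of summation. The character $e(c(k+s))$ and the twist $e(c(s_1-s_2)/N)$ have modulus $1$, so after taking absolute values inside the $s_1,s_2$ sum the dependence on $c$ disappears entirely --- this is the whole point of the lemma. Re-indexing $s=s_2-s_1$ and using that each value of $s\in[-T,T]_{\mathbb Z}$ arises at most $T$ times, one gets
\begin{equation*}
|I'|^2 \le \frac{W+1}{T}\sum_{s\in[-T,T]_{\mathbb Z}}\Big|\frac1N\sum_{k\in\mathbb Z}F_N(k/N)F_N((k+s)/N)\Big|.
\end{equation*}
Now Lemma \ref{distancedifference} together with the distance bound (\ref{DistanceBound}) replaces $f(n((k+s)/N)a(1/N))$ by $f(n(k/N)a(1/N)u(s)) + O(TN^{-1/2}A'_N(\xi)\|f\|_{C^1_b})$, so the inner sum becomes (up to that error) the non-primitive weighted sum of the function $[u(s)\cdot f][f]$ against the weight $\psi_{s/N}\cdot\psi$, where $\psi_a(t):=\psi(t+a)$.

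The main input is then Corollary \ref{to compact support}, which evaluates this sum as $\int_X([u(s)\cdot f]f)\,d\nu\int_{\mathbb R}\psi_{s/N}\psi\,dt$ plus an error controlled by Sobolev norms. For the main term I would use Proposition \ref{MixingRate} (and right-invariance of $\nu$, together with the fact that $f$ has mean zero) to bound $\int_X([u(s)\cdot f]f)\,d\nu \ll_\epsilon (1+|s|)^{-1+\epsilon}\|f\|_{C^4_b}^2$; summing $(1+|s|)^{-1+\epsilon}$ over $|s|\le T$ gives only a $T^\epsilon$ loss, so after the $\frac1T\sum_s$ this term contributes $\ll_\epsilon T^{-1+\epsilon}$. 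For the Corollary error term I would use the adjoint bound (\ref{AdjointBound}) to get $\|[u(s)\cdot f][f]\|_{C^{d'}_b}\ll (1+|s|)^{2d'}\|f\|_{C^{d'}_b}^2$ and the product rule for the Sobolev norm of $\psi_{s/N}\psi$, yielding a contribution $\ll T^{2d'}N^{-\delta_1}$ after the averaging. Collecting everything,
\begin{equation*}
|I'|^2 \ll_\epsilon (W+1)^2 C'_N(\xi,L)\,\|f\|_{C^{d'}_b}^2\|\psi\|_{W^{1,\infty}}^2\big(T^{-1+\epsilon} + TN^{-1/2} + T^{2d'}N^{-\delta_1}\big),
\end{equation*}
and optimizing by taking $T = N^{\delta_1/(1+2d')}$ (which is $\le N^{1/2}$ for small $\delta_1$) balances the first and third terms and makes the second negligible, giving the claimed bound for any $\delta_2 < \delta_1/(2+4d')$; the factor $C'_N$ collapses back to $C'$ via the initial reduction to the enlarged interval. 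The step I expect to need the most care is bookkeeping the dependence of all constants on $\xi$ (ensuring everything is controlled by $C'(\xi,L)$ after the enlargement of the interval) and checking that the van der Corput square-root loss on the main mixing term is genuinely only $T^{\epsilon}$ rather than a full power of $T$ --- i.e. that the $(1+|s|)^{-1+\epsilon}$ decay from Proposition \ref{MixingRate} is strong enough to survive the $\frac1T\sum_{|s|\le T}$ without degrading the exponent.
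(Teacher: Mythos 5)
Your proposal follows the paper's proof essentially step for step: the same reduction to a slightly enlarged interval, the same van der Corput shift-and-average to kill the character, the same Cauchy--Schwarz and reindexing $s=s_2-s_1$, the same application of Lemma \ref{distancedifference} and (\ref{DistanceBound}) to pass to $[u(s)\cdot f]f$, the same invocation of Corollary \ref{to compact support} followed by Proposition \ref{MixingRate} and the adjoint/product rules for the Sobolev norms, and the same choice $T=N^{\delta_1/(1+2d')}$ yielding $\delta_2<\delta_1/(2+4d')$. This is the paper's argument, correctly executed (and your $\|f\|_{C^4_b}^2$ fixes a small typo in the paper's display of the mixing estimate).
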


\begin{corollary}\label{to primitive compact support}
Suppose that $n(t) = (I,\xi(t))u(t)$ satisfies the conditions of Theorem \ref{main} for some $\delta > 0$, $d \in \mathbb{N}$ and $C = C(\xi)$. Then there exists $\delta_2 > 0$
such that for $d' := \max(d,4)$ any $f \in C_b^{d'}(X)$ and any continuously differentiable compactly supported $\psi:\mathbb{R} \to \mathbb{R}$ we have that
\begin{align*}
    & \frac{1}{\phi(q)} \sum_{\substack{p \in \mathbb{Z}:\\ (p,q) = 1}} f(n(p/q)a(1/q))\psi(p/q) = \int_X f \; d \nu \int_{\mathbb{R}} \psi(t) \; dt \\
    + & O_{\epsilon}((W+1) C'(\xi,L)^{1/2} ||f||_{C^{d'}_b(X)} ||\psi||_{W^{1,\infty}} q^{- \delta_2+ \epsilon})
\end{align*}
where $L,W$ and $C'(\xi,L)$ are defined as in Corollary \ref{to compact support}.

\begin{proof}
We will suppose initially that supp$(\psi) \subset [a,b]$ with $b - a = 1$. We will also suppose, initially, that $f$ has zero mean. As in the proof of Lemma \ref{c independence}, let $F_q(t) := f(n(t)a(1/q))\psi(t)$.
Now, let 
$$
\hat{F}_q(k) := \int_a^b F_q(t)e(-kt) \; dt.
$$
By the standard theory of Fourier series, we have that
$$
F_q(t) = \sum_{k \in \mathbb{Z}} \hat{F}_q(k)e(kt)
$$
whenever $t \in [a,b]$. Using this, we see that
\begin{align*}
    \frac{1}{\phi(q)} \sum_{\substack{p \in \mathbb{Z}:\\ (p,q) = 1}} f(n(p/q)a(1/q))\psi(p/q) & = \frac{1}{\phi(q)} \sum_{k \in \mathbb{Z}} \hat{F}_q(k) \sum_{\substack{p \in [qa,qb]_{\mathbb{Z}}:\\ (p,q) = 1}} e(kp/q) \\
    & = \sum_{k \in \mathbb{Z}} \hat{F}_q(k) S(q,k)
\end{align*}
where $$S(q,k) := \frac{1}{\phi(q)}\sum_{\substack{p \in [qa,qb]_{\mathbb{Z}}:\\ (p,q) = 1}} e(kp/q) = \frac{1}{\phi(q)} \sum_{\substack{0 \leq k \leq q-1: \\(p,q) = 1}}e(kp/q).
$$

Since $S(q,k)$ clearly only depends upon $k \pmod{q}$, we can write 
\begin{equation}\label{S_T decomp}
    \sum_{k \in \mathbb{Z}} \hat{F}_q(k) S(q,k) = \sum_{r = 0}^{q-1} S(q,r) \sum_{k \equiv r \pmod{q}} \hat{F}_q(k).
\end{equation}

At this stage, let us note the two following identities which hold for all $m \in \mathbb{Z}$ and $q \in \mathbb{N}$:
\begin{equation}\label{rationalexpsum}
    \frac{1}{q} \sum_{p=0}^{q-1} e(mp/q) = \begin{cases}
    1 & \text { if } q|m \\
    0 & \text{ otherwise}
    \end{cases}
\end{equation}
and 
\begin{equation}\label{primitiveexpsum}
    \frac{1}{\phi(q)} \sum_{\substack{p=0 \\(p,q)=1}}^{q-1} e(mp/q) = \frac{\mu(q_m)}{\phi(q_m)}
\end{equation}
where $\mu$ is the Mobius function and $q_m := q/(q,m)$.

Using (\ref{rationalexpsum}) one sees that 
\begin{align*}
    \sum_{k \equiv r \pmod{q}} \hat{F}_q(k) & = \sum_{k \in \mathbb{Z}} \bigg(\frac{1}{q} \sum_{p=0}^{q-1}  
    e((k-r)p/q) \bigg) \\
    & = \frac{1}{q} \sum_{p=0}^{q-1} e(-rp/q) \sum_{k \in \mathbb{Z}} \hat{F}_q(k) e(kp/q) \\
    & = \frac{1}{q} \sum_{p \in \mathbb{Z}} F_q(p/q) e(-rp/q).
\end{align*}
But, we see applying Lemma \ref{c independence} that 
$$
\frac{1}{q} \sum_{p \in \mathbb{Z}} F_q(p/q) e(-rp/q) \ll C'(\xi,L)^{1/2} ||f||_{C^{d'}_b(X)} ||\psi||_{W^{1,\infty}} q^{- \delta_2}
$$
with implicit constant independent of $r$.

As a result, (\ref{S_T decomp}) tells us that 
$$
\sum_{k \in \mathbb{Z}} \hat{F}_q(k) S(q,k) \ll \frac{ C'(\xi,L)^{1/2} ||f||_{C^{d'}_b(X)} ||\psi||_{W^{1,\infty}}}{ q^{ \delta_2}} \sum_{r=0}^{q-1} |S(q,r)|.
$$

So, it remains to estimate $\sum_{r=0}^{q-1} |S(q,r)|.$
For any $m \in \mathbb{N}$ we find that
$$\sum_{r=0}^{m-1} \frac{1}{\phi(m_r)} = \sum_{d|m} \sum_{\substack{0 \leq r \leq m-1\\(r,m) = m/d}} \frac{1}{\phi(d)}  \leq
\sum_{d|m} \sum_{\substack{0 \leq r \leq m-1\\(m/d)|r}} \frac{1}{\phi(d)}.
$$
Thus, using that $\phi(d) \gg_{\epsilon} d^{1-\epsilon}$ and then that 
$\sum_{d|m} d^{\epsilon} \ll_{\epsilon} m^{2 \epsilon}$ we get that 
$$
\sum_{r=0}^{m-1} \frac{1}{\phi(m_r)} \ll_{\epsilon} m^{\epsilon}.
$$
Using (\ref{primitiveexpsum}) we therefore see that
$$
\sum_{r=0}^{q-1} |S(q,r)| \ll_{\epsilon} q^{\epsilon}.
$$
So, overall we have that
$$
\frac{1}{\phi(q)} \sum_{\substack{p \in \mathbb{Z}:\\ (p,q) = 1}} f(n(p/q)a(1/q))\psi(p/q) \ll_{\epsilon} \frac{ C'(\xi,L)^{1/2} ||f||_{C^{d'}_b(X)} ||\psi||_{W^{1,\infty}}}{ q^{ \delta_2- \epsilon}}
$$
To deal with the case in which $f$ has non-zero mean, we can perform the usual trick of replacing $f$ by $f_0 = f - \int_X f \; d \nu$. As a result, we are left to estimate 
$$
\sum_{\substack{p \in \mathbb{Z} : \\ (p,q) = 1}} \psi(p/q).
$$
which is equal to $\int \psi(t) \; dt + O_{\epsilon}(||\psi||_{W^{1,\infty}} q^{-1 + \epsilon}).
$
This is very similar to the above computation. To briefly outline, decomposing $\psi$ as a Fourier series on the interval $[a,b]$, we write
$\psi(t) = \sum_{k \in \mathbb{Z}} \hat{\psi}(k) e(kt)$ 
and using (\ref{primitiveexpsum}), we arrive at 
$$
\bigg| \sum_{\substack{p \in \mathbb{Z} : \\ (p,q) = 1}} \psi(p/q) - \int_X \psi(t) \; dt \bigg| \leq \sum_{k \in \mathbb{Z} \backslash \{0 \} } \frac{|\hat{\psi}(k)|}{\phi(q_k)} 
$$
Now, as above, we have that 
$$
\sum_{k \in \mathbb{Z} \backslash \{0 \} } \frac{|\hat{\psi}(k)|}{\phi(q_k)} \leq \sum_{d|q} \frac{1}{\phi(d)} \sum_{(q/d) | k} |\hat{\psi}(k)| = \frac{1}{q} \sum_{d|q} \frac{d}{\phi(d)} \sum_{k \in \mathbb{Z} \backslash \{0\} } \frac{1}{|k|} \bigg| \frac{kq}{d} \hat{\psi}\bigg( \frac{kq}{d} \bigg) \bigg| .
$$
Using integration by parts, we see know that $k |\hat{\psi}(k)| \asymp |\hat{\psi'}(k)|$ and so, applying the Cauchy-Schwartz inequality and Parseval's identity, we see that
$$
 \sum_{k \in \mathbb{Z} \backslash \{0\} } \frac{1}{|k|} \bigg| \frac{kq}{d} \hat{\psi}\bigg( \frac{kq}{d} \bigg) \bigg| \ll \sum_{k \in \mathbb{Z}} |\hat{\psi'}(k)|^2 \ll ||\psi'||_2
$$
The claimed bound then follows from applying the same bounds on $\phi(d)$ and the divisor sums we used previously. 

Finally, to deal with the case where the width of the support of $\psi$ is more than 1, we find a function $\Delta \in C_c^{\infty}(\mathbb{R})$ with supp$(\Delta) \subset [0,1]$ and which has the property that 
\begin{equation}\label{sumto1}
    \sum_{j \in \mathbb{Z}} \Delta(t- j/2) = 1
\end{equation}
for all $t \in \mathbb{R}$. Let $\Delta_j(t) := \Delta(t-j/2)$. Given that $\Delta_j \psi$ always has support of width at most 1 and that, by the product rule, 
$|| \psi \Delta_j ||_{W^{1,\infty}} \ll || \psi ||_{W^{1,\infty}} $ we have that 
\begin{align}
    & \frac{1}{\phi(q)} \sum_{\substack{p \in \mathbb{Z}:\\ (p,q) = 1}} f(n(p/q)a(1/q))\psi(p/q) \Delta_j(p/q) = \int_X f \; d \nu \int_{\mathbb{R}} \psi(t) \Delta_j(t) \; dt \label{onej}\\   +&  O_{\epsilon}\bigg(\frac{ C'(\xi,L)^{1/2} ||f||_{C^{d'}_b(X)} ||\psi||_{W^{1,\infty}}}{ q^{ \delta_2- \epsilon}} \bigg) \nonumber
\end{align}
Moreover, we have an exact equality here when the supports of $\Delta_j$ and $\psi$ don't intersect. Given the number of $j$ such that their supports do intersect is $\ll W + 1$ the result follows from summing (\ref{onej}) over all $j$ and using (\ref{sumto1}).

\end{proof}
\end{corollary}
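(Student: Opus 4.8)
The plan is to expand the weighted average over primitive fractions $p/q$ into a Fourier series on a unit interval and reduce it, one residue class modulo $q$ at a time, to the twisted non-primitive averages already controlled by Lemma~\ref{c independence}. First I would make two reductions. Using a fixed smooth partition of unity $\sum_{j\in\mathbb{Z}}\Delta(t-j/2)=1$ with $\Delta\in C_c^{\infty}$ supported in $[0,1]$, I write $\psi=\sum_j\psi\Delta_j$; each $\psi\Delta_j$ has support of width at most $1$, satisfies $\|\psi\Delta_j\|_{W^{1,\infty}}\ll\|\psi\|_{W^{1,\infty}}$ by the product rule, and only $O(W+1)$ values of $j$ have support meeting $\mathrm{supp}(\psi)$, so it suffices to treat the case $\mathrm{supp}(\psi)\subseteq[a,b]$ with $b-a=1$. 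Then, writing $f_0:=f-\int_X f\,d\nu$, the contribution of the constant part is $\big(\int_X f\,d\nu\big)\cdot\frac{1}{\phi(q)}\sum_{(p,q)=1}\psi(p/q)$, which I treat separately at the end, so I may assume $f$ has mean zero.

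For mean-zero $f$ I would set $F_q(t):=f(n(t)a(1/q))\psi(t)$, a $C^1$ function supported in $[a,b]$, and expand $F_q(t)=\sum_{k\in\mathbb{Z}}\hat F_q(k)e(kt)$ with $\hat F_q(k):=\int_a^b F_q(t)e(-kt)\,dt$. Interchanging the sums gives
\[
\frac{1}{\phi(q)}\sum_{(p,q)=1}f(n(p/q)a(1/q))\psi(p/q)=\sum_{k\in\mathbb{Z}}\hat F_q(k)\,S(q,k),\qquad S(q,k):=\frac{1}{\phi(q)}\sum_{\substack{p\in[qa,qb]\\(p,q)=1}}e(kp/q).
\]
Since $S(q,k)$ depends only on $k\bmod q$, I group by residue class: $\sum_k\hat F_q(k)S(q,k)=\sum_{r=0}^{q-1}S(q,r)\sum_{k\equiv r\,(q)}\hat F_q(k)$. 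By the orthogonality relation (\ref{rationalexpsum}) together with the $1$-periodicity of the Fourier series, $\sum_{k\equiv r\,(q)}\hat F_q(k)=\frac{1}{q}\sum_{p\in\mathbb{Z}}F_q(p/q)e(-rp/q)$, which is precisely the twisted non-primitive average $\frac{1}{q}\sum_p f(n(p/q)a(1/q))\psi(p/q)e(-rp/q)$. Applying Lemma~\ref{c independence} with $N=q$ and $c=-r$ bounds this by $C'(\xi,L)^{1/2}\|f\|_{C^{d'}_b(X)}\|\psi\|_{W^{1,\infty}}q^{-\delta_2}$, \emph{uniformly in} $r$.

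To finish the mean-zero case I need $\sum_{r=0}^{q-1}|S(q,r)|\ll_{\epsilon} q^{\epsilon}$, so that summing the uniform bound over the $q$ residue classes costs only $q^{\epsilon}$. By (\ref{primitiveexpsum}), $S(q,r)=\mu(q_r)/\phi(q_r)$ with $q_r=q/(q,r)$; grouping by $d:=q_r$, using $\#\{0\le r<q:(q/d)\mid r\}=d$, the bound $\phi(d)\gg_{\epsilon} d^{1-\epsilon}$ and the divisor bound gives $\sum_r|S(q,r)|\le\sum_{d\mid q}d/\phi(d)\ll_{\epsilon}q^{\epsilon}$, so the mean-zero estimate holds with exponent $\delta_2-\epsilon$. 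For the constant part I would expand $\psi=\sum_k\hat\psi(k)e(kt)$ on $[a,b]$ and apply (\ref{primitiveexpsum}) termwise: the $k=0$ term equals $\int_{\mathbb{R}}\psi$, and the tail is at most $\sum_{k\ne0}|\hat\psi(k)|/\phi(q_k)$; grouping by $d:=q_k$ and using integration by parts ($|k|\,|\hat\psi(k)|\asymp|\widehat{\psi'}(k)|$), Cauchy--Schwarz and Parseval bounds the inner sum by $\frac{d}{q}\|\psi'\|_2$, hence the tail by $\|\psi\|_{W^{1,\infty}}q^{-1+\epsilon}$. Combining the two cases and undoing the partition-of-unity reduction gives the corollary.

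The dynamical input is already black-boxed in Lemma~\ref{c independence}, so the main obstacle here is arithmetic bookkeeping: one must arrange the Fourier decomposition so that the error in each residue class is controlled \emph{uniformly in the additive character} (so that Lemma~\ref{c independence} applies as stated), and then avoid the naive factor-$q$ loss when summing over the $q$ residues by exploiting the cancellation in the Ramanujan-type sums $S(q,r)=\mu(q_r)/\phi(q_r)$. It is the bound $\sum_r|S(q,r)|\ll_{\epsilon}q^{\epsilon}$, together with the parallel integration-by-parts/Parseval estimate for the constant term, that lets the scheme close with only an $\epsilon$ loss in the exponent; keeping the implicit constants dependent only on $\xi|_{[\alpha,\beta]}$ requires the same care already taken in Lemma~\ref{c independence}.
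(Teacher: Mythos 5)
Your proposal follows essentially the same route as the paper: Fourier expansion of $F_q$ on a unit interval, regrouping by residue class mod $q$ so that Lemma~\ref{c independence} applies uniformly in the additive twist, the Ramanujan-sum bound $\sum_r|S(q,r)|\ll_\epsilon q^\epsilon$ to avoid the naive factor-$q$ loss, a separate integration-by-parts/Parseval estimate for the constant part of $f$, and a smooth partition of unity to pass from width-one support to general $\psi$. The only difference is that you perform the partition-of-unity reduction at the outset while the paper defers it to the end, which is an immaterial reordering of the same argument.
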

\section{Concrete equidistribution results}\label{concreteEquidistribution}

To finish off, we prove the two main Corollaries stated in \S 1.

\begin{proof}[Proof of Corollary \ref{Strom Cor}]
In the case, Theorem 1.2 from \cite{strombergsson2015effective} tells us that, when $\xi = (\xi_1,\xi_2)$ is of Diophantine type $K$ with a constant $c >0$, (\ref{initialassumption}) holds with $C \geq 1$ being a fixed constant depending only on $(c,K)$, $\delta = \min(1/4,1/K)$ and $d = 8$. Moreover, here $A(\xi) = |\xi_1| + |\xi_2|$. Therefore, in this case we have that $C'(\xi,L) \leq C(c,K)(1 + L^{1/2} + |\xi_1| + |\xi_2| + 1)$ and so the claimed results follow from Corollaries \ref{to compact support} and \ref{to primitive compact support}.
\end{proof}

\begin{proof}[Proof of Corollary \ref{Brown Cor}]
In this case, Theorem 1.2 in \cite{browning2016effective} implies that (\ref{initialassumption}) holds with $d = 8$. 
Similarly to as in the proof of Corollary \ref{to primitive compact support}, we choose a function $\Delta \in C_c^{\infty}(\mathbb{R})$ with supp$(\Delta) \subset (-2,2)$ such that 
\begin{equation}\label{partitionof1}
    \sum_{j \in \mathbb{Z}} \Delta(t-2j) = 1
\end{equation}
for all $t \in \mathbb{R}$. Let $\Delta_j(t) := \Delta(t-2j)$. In the non-primitive case, we have that
\begin{align*}
    & \frac{1}{N} \sum_{k \in \mathbb{Z}} f(n(k/N)a(1/N))\psi(k/N) \\
    = & \sum_{j \in \mathbb{Z}} \frac{1}{N} \sum_{k \in \mathbb{Z}} f(n(k/N)a(1/N))\psi(k/N) \Delta(k/N - 2j)
\end{align*}
Using that $n(t)$ has period 2, Corollary \ref{to compact support}, and that $||\psi \Delta_j||_{W^{1,\infty}} \ll ||\psi||_{W^{1,\infty}}$ we have that
\begin{align}
    & \bigg| \frac{1}{N} \sum_{k \in \mathbb{Z}} f(n(k/N)a(1/N))\psi(k/N) \Delta(k/N - 2j)  - \int_X f \; d \nu \int_{\mathbb{R}} \psi \Delta_j \; dt \bigg| \label{summableoverj} \\
    = & \bigg| \frac{1}{N} \sum_{k \in \mathbb{Z}} f(n(k/N)a(1/N))\psi(k/N+2j)  \Delta(k/N)-  \int_X f \; d \nu \int_{\mathbb{R}} \psi \Delta_j \; dt \bigg| \nonumber \\
    \leq & C ||f||_{C^8_b(X)}||\psi||_{W^{1,\infty}} N^{-\delta_1} \label{boundonsummableoverj}
\end{align}
for some constant $C > 0$. By summing (\ref{summableoverj}) over all $j$ such that the support of $\Delta_j$ and $\psi$ 
intersect, of which there are $\ll (W+1)$, using the bound
(\ref{boundonsummableoverj}) and (\ref{partitionof1}), we get the result in the non-primitive case. 

The result in the primitive case uses the exact same argument with the sum over $k$ replaced by a sum over all $p$ with $(p,q)=1$ and so we omit the details. 
\end{proof}

\section{Acknowledgements}

I would like to thank Jens Marklof for his advice throughout the 
writing of this paper and the Heilbronn Institute for Mathematical Research for their support.
\printbibliography

\end{document}